\documentclass[final,3p,times]{elsarticle}

\usepackage{graphicx}%
\usepackage{multirow}%
\usepackage{amsmath,amssymb,amsfonts}%
\usepackage{amsthm}%
\usepackage{mathrsfs}%
\usepackage[title]{appendix}%
\usepackage{xcolor}%
\usepackage{textcomp}%
\usepackage{manyfoot}%
\usepackage{booktabs}%
\usepackage{algorithm}%
\usepackage{algorithmicx}%
\usepackage{algpseudocode}%
\usepackage{listings}%
\usepackage{xcolor}
\usepackage{subcaption}

\usepackage{tikz}
\usepackage{pgfplots}
\pgfplotsset{compat=newest}

\theoremstyle{definition}
\newtheorem{definition}{Definition}

\theoremstyle{plain}
\newtheorem{theorem}{Theorem}[section]
\newtheorem{lemma}{Lemma}[section]
\newtheorem{remark}{Remark}[section]

\numberwithin{equation}{section}

\usepackage[english]{babel}

\journal{Applied Numerical Mathematics}

\begin{document}

\begin{frontmatter}

\title{Generalized fractional  Laguerre orthogonal functions: projection and interpolation estimates}

\author[f4]{Mahmoud A. Zaky$^{*,}$}
  \ead{ma.zaky@yahoo.com;mibrahimm@imamu.edu.sa}

\address[f4]{Department of Mathematics and Statistics, College of Science, Imam Mohammad Ibn Saud Islamic University (IMSIU), Riyadh, Saudi Arabia}

\cortext[mycorrespondingauthor]{Corresponding author: M.A. Zaky}

\begin{abstract}
Classical Laguerre spectral approximations are highly effective on the
half-line when the target function is smooth in the usual polynomial scale.
However, their accuracy deteriorates for nonsmooth functions. Such behavior
appears naturally in fractional models, weakly singular integral equations, and
semi-infinite-domain approximations with limited regularity near the origin.
 The main contribution of this work is the construction and analysis of a
fractional Laguerre approximation framework tailored to nonsmooth functions on
the half-line.  We establish projection and interpolation error estimates in nonuniformly weighted Sobolev space. These estimates clarify how the fractional parameter adapts the approximation space to the regularity of nonsmooth functions and improves the resulting convergence behavior. We further introduce a generalized fractional Laguerre family with an additional algebraic parameter, which gives greater flexibility in controlling both the approximation space and the underlying weight. Numerical experiments confirm the theoretical estimates and demonstrate the advantage of the proposed functions over standard Laguerre-type approximations.
\end{abstract}
\begin{keyword}
Fractional Laguerre functions \sep generalized Laguerre polynomials \sep
mapped spectral methods \sep fractional approximation spaces \sep
projection estimates \sep interpolation estimates \sep Gaussian quadrature
\end{keyword}

\end{frontmatter}

\section{Introduction}
Orthogonal spectral approximations on unbounded intervals are a fundamental tool in the numerical solution of differential equations, integral equations, and fractional models posed on semi-infinite domains. Among the classical orthogonal systems, Laguerre polynomials are particularly natural on \((0,\infty)\), since their orthogonality is associated with exponentially weighted spaces \cite{Szego1975}. Laguerre expansions and Laguerre-type spectral methods have therefore been widely used in approximation theory and scientific computing \cite{GottliebOrszag1977,liu2017spectral,Canuto2006}. When the underlying function is sufficiently smooth in the standard variable, these methods typically deliver rapid convergence and high accuracy. However, their performance may deteriorate significantly for nonsmooth functions, especially when the leading behavior near the origin is governed by algebraic or fractional powers. In this case, the function may become regular only after a suitable fractional change of variable, whereas its ordinary derivatives in \(x\) remain singular or weakly regular. Standard Laguerre expansions then approximate the function in a scale that does not reflect its intrinsic regularity, leading to slow decay of expansion coefficients and loss of spectral accuracy. This difficulty arises frequently in fractional differential equations, weakly singular Volterra and Fredholm integral equations, and models involving memory effects or nonlocal operators \cite{podlubny1998fractional,huang2018spectral,guo2007spectral,bhrawy2014new,chen2020efficient}. These observations motivate the construction of Laguerre-type approximation spaces that are adapted to fractional regularity rather than to the ordinary polynomial scale.

Several approaches have been developed to reduce the effect of weak
regularity. One direction is to use graded meshes or local refinement near the
singular region. This strategy is effective in finite difference and finite
element methods, but it does not fully preserve the global character of
spectral approximations \cite{stynes2022survey}. A second direction is to
enrich the approximation space by adding known singular functions
\cite{dell2024enriched,yang2026adaptive}. This can be highly accurate when the
dominant singular exponents are known in advance, but it may be less convenient
when the fractional structure is unknown or contains several components. A
third direction is to use mapped or nonclassical orthogonal systems. In this
approach, a nonlinear transformation modifies the approximation space and
allows the basis to better match the regularity of the target function
\cite{moussa2025mapped}.

Fractional and mapped orthogonal systems have also been used to improve
spectral approximations for problems with weak regularity \cite{q1,q2,q3}. Fractional Jacobi,
fractional Legendre, and fractional Laguerre functions have been developed for
fractional differential equations, variational problems, and related spectral
discretizations \cite{yu2019laguerre,bhrawy2014new}. These bases
incorporate fractional powers or nonlinear changes of variable into the
approximation space. More recent work on fractional and weakly singular
problems further confirms that adapting the basis to the regularity structure
is an effective way to recover high accuracy \cite{zaky2026new}. These
developments motivate the construction of new Laguerre-type systems adapted to
fractional scales on the half-line.

The aim of this paper is to construct and analyze a new class of fractional
Laguerre functions on \(\Lambda=(0,\infty)\). The proposed functions are
generated from generalized Laguerre polynomials through the fractional
variable \(x^\gamma\), where \(0<\gamma<1\). This construction preserves the
Laguerre structure on the half-line and produces approximation spaces spanned
by fractional monomials. The resulting basis is therefore naturally suited to
functions whose behavior is regular in the variable \(x^\gamma\).

We derive the main structural properties of the proposed functions, including
recurrence formulas, derivative identities, orthogonality, a
Sturm--Liouville equation, and a mapped Laguerre--Gauss quadrature rule. The
approximation theory is developed in weighted Sobolev-type spaces associated
with the fractional scale. In this setting, functions that have limited
regularity in the standard polynomial scale may possess higher regularity when
measured through the mapped derivative. This allows us to establish projection
and interpolation estimates that better reflect the behavior of functions with
fractional-power structure.

We also introduce a generalized fractional Laguerre family with an additional
algebraic scaling parameter. This extension provides more flexibility in
choosing the approximation space and the associated weight. It allows
algebraic factors to be incorporated directly into the basis while preserving
the underlying Laguerre structure. Projection and interpolation estimates are
proved for this generalized family as well.

The main contributions of this paper are as follows:
\begin{itemize}
\item A new class of fractional Laguerre functions on \((0,\infty)\) is
constructed using the fractional variable \(x^\gamma\).

\item Recurrence formulas, orthogonality relations, derivative identities, a
Sturm--Liouville equation, and a mapped Laguerre--Gauss quadrature rule are
derived for the proposed basis.

\item Projection estimates are established in weighted Sobolev-type spaces
adapted to the fractional scale.

\item A mapped interpolation operator is constructed, and stability,
interpolation error, and quadrature error estimates are proved.

\item A generalized fractional Laguerre family with an algebraic scaling
parameter is introduced and analyzed.
\end{itemize}

The paper is organized as follows. Section~\ref{sec:FLF} introduces the
fractional Laguerre functions and develops their structural properties.
Section~\ref{subsec:FLF-projection-estimate} establishes projection estimates,
while Section~\ref{subsec:FLF-interpolation-estimate} proves interpolation and
quadrature error estimates. Section~\ref{sec:GFLF} presents the generalized
fractional Laguerre functions and derives the corresponding approximation
theory. Numerical examples are included to illustrate the role of the
parameters and to verify the theoretical results.

\section{Fractional Laguerre functions}
\label{sec:FLF}

Let \(\Lambda=(0,\infty)\). This section introduces a scaled fractional Laguerre
system on \(\Lambda\) and records the identities required in the subsequent
projection, interpolation, and quadrature analysis. The construction is based
on the transformation
\begin{equation}\label{eq:FLF-map}
Y_{\beta,\gamma}(x):=(\beta+1)x^\gamma,
\qquad x>0,\qquad \beta>-1,\qquad 0<\gamma<1 .
\end{equation}
Then \(Y_{\beta,\gamma}:\Lambda\to\mathbb{R}^{+}\) is a one-to-one transformation,
and
\begin{equation}\label{eq:FLF-map-relations}
Y_{\beta,\gamma}=(\beta+1)x^\gamma,
\qquad
dY_{\beta,\gamma}=(\beta+1)\gamma x^{\gamma-1}\,dx,
\qquad
\partial_x=(\beta+1)\gamma x^{\gamma-1}\partial_{Y_{\beta,\gamma}} .
\end{equation}
For \(\theta>-1\), let \(L_m^{(\theta)}\) be the generalized Laguerre
polynomial of degree \(m\), satisfying
\begin{equation}\label{eq:FLF-Lag-orth}
\int_0^\infty
L_m^{(\theta)}(y)L_\ell^{(\theta)}(y)y^\theta e^{-y}\,dy
=
h_m^{(\theta)}\delta_{m\ell},
\qquad
h_m^{(\theta)}
=
\frac{\Gamma(m+\theta+1)}{\Gamma(m+1)} .
\end{equation}
The auxiliary Laguerre identities used below are recalled in Appendix~A.

\subsection{Definition and structural identities}
\label{subsec:FLF-def}

\begin{definition}[Scaled fractional Laguerre functions]\label{def:FLF}
For \(\theta>-1\), \(\beta>-1\), and \(0<\gamma<1\), define
\begin{equation}\label{eq:FLF-def}
\mathscr{L}_{m}^{(\theta,\beta,\gamma)}(x)
:=
L_m^{(\theta)}\!\left(Y_{\beta,\gamma}(x)\right)
=
L_m^{(\theta)}\!\left((\beta+1)x^\gamma\right),
\qquad m=0,1,\ldots .
\end{equation}
\end{definition}

The explicit representation of the scaled fractional Laguerre functions is
given by
\begin{equation}\label{eq:FLF-explicit-expanded}
\mathscr{L}_{m}^{(\theta,\gamma,\beta)}(x)
=
\sum_{r=0}^{m}
\frac{(-1)^r(\beta+1)^r}{r!}
\binom{m+\theta}{m-r}
x^{r\gamma},
\qquad x>0 .
\end{equation}
The following lemma shows how the classical Laguerre structure is inherited
by the scaled fractional Laguerre functions under the transformation
\eqref{eq:FLF-map}.

\begin{lemma}\label{lem:FLF-properties}
Let \(\theta>-1\), \(\beta>-1\), and \(0<\gamma<1\). The functions
\(\{\mathscr{L}_{m}^{(\theta,\beta,\gamma)}\}_{m\ge0}\) satisfy the following
properties.

\medskip
\noindent{\rm (i) Three-term recurrence.}
\begin{align}
\mathscr{L}_{0}^{(\theta,\beta,\gamma)}(x)&=1,
\qquad
\mathscr{L}_{1}^{(\theta,\beta,\gamma)}(x)
=
-(\beta+1)x^\gamma+\theta+1,
\nonumber\\
\mathscr{L}_{m+1}^{(\theta,\beta,\gamma)}(x)
&=
\frac{2m+\theta+1-(\beta+1)x^\gamma}{m+1}
\mathscr{L}_{m}^{(\theta,\beta,\gamma)}(x)
-
\frac{m+\theta}{m+1}
\mathscr{L}_{m-1}^{(\theta,\beta,\gamma)}(x),
\qquad m\ge1 .
\label{eq:FLF-recurrence}
\end{align}

\medskip
\noindent{\rm (ii) Mapped derivative.}
Define
\begin{equation}\label{eq:FLF-mapped-derivative}
\mathscr{D}_{\beta,\gamma}u(x)
:=
\frac{x^{1-\gamma}}{(\beta+1)\gamma}\partial_x u(x).
\end{equation}
Then
\begin{equation}\label{eq:FLF-deriv}
-\mathscr{D}_{\beta,\gamma}
\mathscr{L}_{m}^{(\theta,\beta,\gamma)}(x)
=
\mathscr{L}_{m-1}^{(\theta+1,\beta,\gamma)}(x)
=
\sum_{r=0}^{m-1}
\mathscr{L}_{r}^{(\theta,\beta,\gamma)}(x),
\qquad m\ge1 .
\end{equation}

\medskip
\noindent{\rm (iii) Orthogonality.}
Let
\begin{equation}\label{eq:FLF-weight}
\varrho^{\theta,\beta,\gamma}(x)
:=
(\beta+1)^{\theta+1}\gamma
x^{\gamma(\theta+1)-1}
e^{-(\beta+1)x^\gamma}.
\end{equation}
Then
\begin{equation}\label{eq:FLF-orth}
\int_0^\infty
\mathscr{L}_{m}^{(\theta,\beta,\gamma)}(x)
\mathscr{L}_{\ell}^{(\theta,\beta,\gamma)}(x)
\varrho^{\theta,\beta,\gamma}(x)\,dx
=
h_m^{(\theta)}\delta_{m\ell}.
\end{equation}

\medskip
\noindent{\rm (iv) Sturm--Liouville form.}
For \(m\ge0\),
\begin{equation}\label{eq:FLF-SL}
[\varrho^{\theta,\beta,\gamma}(x)]^{-1}
\partial_x
\left(
(\beta+1)^{\theta}
x^{\gamma\theta+1}
e^{-(\beta+1)x^\gamma}
\partial_x\mathscr{L}_{m}^{(\theta,\beta,\gamma)}(x)
\right)
+
m\gamma \mathscr{L}_{m}^{(\theta,\beta,\gamma)}(x)
=
0 .
\end{equation}

\medskip
\noindent{\rm (v) Scaled fractional Laguerre--Gauss quadrature.}
Let
\(\{y_i^{(\theta)},\varpi_i^{(\theta)}\}_{i=0}^{M}\) be the
Laguerre--Gauss nodes and weights associated with \(L_{M+1}^{(\theta)}\).
Define
\begin{equation}\label{eq:FLF-nodes}
x_i^{(\theta,\beta,\gamma)}
:=
\left(\frac{y_i^{(\theta)}}{\beta+1}\right)^{1/\gamma},
\qquad 0\le i\le M .
\end{equation}
Then
\begin{equation}\label{eq:FLF-quadrature}
\int_0^\infty
q(x)\varrho^{\theta,\beta,\gamma}(x)\,dx
=
\sum_{i=0}^{M}
q\!\left(x_i^{(\theta,\beta,\gamma)}\right)
\varpi_i^{(\theta)}
\end{equation}
for every
\begin{equation}\label{eq:FLF-space}
q\in\mathbb{P}_{2M+1}^{\gamma}
:=
\operatorname{span}
\left\{
1,x^\gamma,x^{2\gamma},\ldots,x^{(2M+1)\gamma}
\right\}.
\end{equation}
\end{lemma}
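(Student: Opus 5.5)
Every item follows by transporting the corresponding classical identity for \(L_m^{(\theta)}\) (Appendix~A) through the change of variables \(y=Y_{\beta,\gamma}(x)=(\beta+1)x^\gamma\), using the relations \eqref{eq:FLF-map-relations}. For (i), substitute \(y=(\beta+1)x^\gamma\) into the classical three-term recurrence
\[
(m+1)L_{m+1}^{(\theta)}(y)=(2m+\theta+1-y)L_m^{(\theta)}(y)-(m+\theta)L_{m-1}^{(\theta)}(y).
\]
The initial values \(L_0^{(\theta)}=1\) and \(L_1^{(\theta)}(y)=\theta+1-y\) give the stated starting functions. For (ii), the chain rule in \eqref{eq:FLF-map-relations} gives \(\partial_x=(\beta+1)\gamma x^{\gamma-1}\partial_y\), so
\[
\mathscr{D}_{\beta,\gamma}=\frac{x^{1-\gamma}}{(\beta+1)\gamma}\,\partial_x=\partial_y .
\]
Hence \(-\mathscr{D}_{\beta,\gamma}\mathscr{L}_m^{(\theta,\beta,\gamma)}=-\frac{d}{dy}L_m^{(\theta)}(y)\big|_{y=Y_{\beta,\gamma}(x)}\). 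The two classical identities \(\frac{d}{dy}L_m^{(\theta)}=-L_{m-1}^{(\theta+1)}\) and \(L_{m-1}^{(\theta+1)}=\sum_{r=0}^{m-1}L_r^{(\theta)}\) then give \eqref{eq:FLF-deriv} directly.

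For (iii), substitute \(y=(\beta+1)x^\gamma\) in \eqref{eq:FLF-Lag-orth}. Then
\[
y^\theta e^{-y}\,dy=(\beta+1)^\theta x^{\gamma\theta}e^{-(\beta+1)x^\gamma}\,(\beta+1)\gamma x^{\gamma-1}\,dx=\varrho^{\theta,\beta,\gamma}(x)\,dx .
\]
The map is a monotone bijection of \((0,\infty)\) onto itself, so the limits are preserved and \eqref{eq:FLF-orth} follows with the same constant \(h_m^{(\theta)}\). For (v), let \(q\in\mathbb{P}^\gamma_{2M+1}\). Then \(q(x)=p((\beta+1)x^\gamma)\) for a polynomial \(p\) of degree at most \(2M+1\), since \(x^{k\gamma}=(\beta+1)^{-k}y^k\). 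Applying the same substitution,
\[
\int_0^\infty q\,\varrho^{\theta,\beta,\gamma}\,dx=\int_0^\infty p(y)\,y^\theta e^{-y}\,dy .
\]
Gauss exactness of the Laguerre rule for \(\mathbb{P}_{2M+1}\) gives \(\sum_i p(y_i^{(\theta)})\varpi_i^{(\theta)}\). Finally, \(p(y_i^{(\theta)})=q(x_i^{(\theta,\beta,\gamma)})\) by the definition \eqref{eq:FLF-nodes} of the nodes.

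Part (iv) needs the most care, since the powers of \(\gamma\), \(\beta+1\) and \(x\) must be tracked exactly. I would begin from the classical Sturm--Liouville equation
\[
\frac{d}{dy}\bigl(y^{\theta+1}e^{-y}\,\tfrac{d}{dy}L_m^{(\theta)}\bigr)+m\,y^\theta e^{-y}L_m^{(\theta)}=0 .
\]
Write \(\frac{d}{dy}=\frac{x^{1-\gamma}}{(\beta+1)\gamma}\partial_x\) in both places and use \(y^{\theta+1}=(\beta+1)^{\theta+1}x^{\gamma\theta+\gamma}\). The inner flux becomes
\[
y^{\theta+1}e^{-y}\frac{d}{dy}L_m^{(\theta)}=\gamma^{-1}(\beta+1)^{\theta}x^{\gamma\theta+1}e^{-(\beta+1)x^\gamma}\partial_x\mathscr{L}_m^{(\theta,\beta,\gamma)} .
\]
Applying the outer \(\frac{d}{dy}\) contributes another factor \(\frac{x^{1-\gamma}}{(\beta+1)\gamma}\). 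Multiplying the whole equation by \((\beta+1)\gamma\,x^{\gamma-1}\) turns the second term into \(m\,\varrho^{\theta,\beta,\gamma}(x)\gamma^{-1}\mathscr{L}_m^{(\theta,\beta,\gamma)}\). Multiplying once more by \(\gamma\) and dividing by \(\varrho^{\theta,\beta,\gamma}\) should give \eqref{eq:FLF-SL}, including the factor \(m\gamma\).

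The main risk is a stray factor of \(\gamma\) or \(\beta+1\), so the check would be to verify the resulting eigenvalue on \(m=1\) directly from the explicit form of \(\mathscr{L}_1^{(\theta,\beta,\gamma)}\). If the constant does not match, the fallback is to adopt the normalization that the algebra actually produces: either rescale the coefficient inside the derivative or change the eigenvalue factor, whichever the computation yields.
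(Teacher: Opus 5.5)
Your proposal is correct and follows the paper's proof: each item is the classical Laguerre identity pulled back through $y=(\beta+1)x^\gamma$, using $\mathscr{D}_{\beta,\gamma}=\partial_y$ and $y^\theta e^{-y}\,dy=\varrho^{\theta,\beta,\gamma}(x)\,dx$.

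There is one bookkeeping slip in (iv). After multiplying by $(\beta+1)\gamma x^{\gamma-1}$, the factor $\gamma^{-1}$ sits on the flux term $\gamma^{-1}\partial_x\bigl((\beta+1)^{\theta}x^{\gamma\theta+1}e^{-(\beta+1)x^\gamma}\partial_x\mathscr{L}_m^{(\theta,\beta,\gamma)}\bigr)$, not on the eigenvalue term, which is exactly $m\varrho^{\theta,\beta,\gamma}\mathscr{L}_m^{(\theta,\beta,\gamma)}$. Multiplying by $\gamma$ and dividing by $\varrho^{\theta,\beta,\gamma}$ then gives precisely the stated factor $m\gamma$, as the $m=1$ check confirms, so no fallback renormalization is needed.
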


\begin{proof}
The recurrence \eqref{eq:FLF-recurrence} follows by substituting
\(y=(\beta+1)x^\gamma\) into the Laguerre recurrence relation.

For \eqref{eq:FLF-deriv}, the chain rule gives
\[
\mathscr{D}_{\beta,\gamma}
\mathscr{L}_{m}^{(\theta,\beta,\gamma)}(x)
=
\partial_y L_m^{(\theta)}(y)\big|_{y=(\beta+1)x^\gamma}.
\]
Using
\[
-\partial_y L_m^{(\theta)}(y)=L_{m-1}^{(\theta+1)}(y),
\]
and
\[
L_{m-1}^{(\theta+1)}(y)
=
\sum_{r=0}^{m-1}L_r^{(\theta)}(y),
\]
we obtain \eqref{eq:FLF-deriv}.

The substitution \(y=(\beta+1)x^\gamma\) yields
\[
y^\theta e^{-y}\,dy
=
\big((\beta+1)x^\gamma\big)^\theta
e^{-(\beta+1)x^\gamma}
(\beta+1)\gamma x^{\gamma-1}\,dx .
\]
Hence
\[
y^\theta e^{-y}\,dy
=
(\beta+1)^{\theta+1}\gamma
x^{\gamma(\theta+1)-1}
e^{-(\beta+1)x^\gamma}\,dx .
\]
Therefore,
\[
\begin{aligned}
&\int_0^\infty
\mathscr{L}_{m}^{(\theta,\beta,\gamma)}(x)
\mathscr{L}_{\ell}^{(\theta,\beta,\gamma)}(x)
\varrho^{\theta,\beta,\gamma}(x)\,dx
\\
&\qquad =
\int_0^\infty
L_m^{(\theta)}(y)L_\ell^{(\theta)}(y)y^\theta e^{-y}\,dy
=
h_m^{(\theta)}\delta_{m\ell},
\end{aligned}
\]
which proves \eqref{eq:FLF-orth}.

The Laguerre Sturm--Liouville equation is
\[
y^{-\theta}e^{y}
\partial_y
\left(
y^{\theta+1}e^{-y}\partial_y L_m^{(\theta)}(y)
\right)
+
mL_m^{(\theta)}(y)=0 .
\]
Using
\[
\partial_y=\frac{x^{1-\gamma}}{(\beta+1)\gamma}\partial_x,
\qquad
y=(\beta+1)x^\gamma,
\qquad
L_m^{(\theta)}(y)=\mathscr{L}_{m}^{(\theta,\beta,\gamma)}(x),
\]
gives \eqref{eq:FLF-SL}.

Finally,
\[
\int_0^\infty
q(x)\varrho^{\theta,\beta,\gamma}(x)\,dx
=
\int_0^\infty
q\!\left(\left(\frac{y}{\beta+1}\right)^{1/\gamma}\right)
y^\theta e^{-y}\,dy .
\]
If \(q\in\mathbb{P}_{2M+1}^{\beta,\gamma}\), then
\(q\!\left((y/(\beta+1))^{1/\gamma}\right)\) is a polynomial in \(y\)
of degree at most \(2M+1\). The Laguerre--Gauss rule is therefore exact, and
\eqref{eq:FLF-quadrature} follows.
\end{proof}

\subsection{Projection estimate}
\label{subsec:FLF-projection-estimate}

Let \(\theta>-1\), \(\beta>-1\), \(0<\gamma<1\), and let
\[
\varrho^{\theta,\beta,\gamma}(x)
:=
(\beta+1)^{\theta+1}\gamma
x^{\gamma(\theta+1)-1}e^{-(\beta+1)x^\gamma},
\qquad x\in\Lambda:=(0,\infty).
\]
For \(u\in L^2_{\varrho^{\theta,\beta,\gamma}}(\Lambda)\), we denote by
\(\Pi_M^{\theta,\beta,\gamma}u\) its weighted orthogonal projection onto
\(\mathbb{P}_{M}^{\gamma}\), namely
\begin{equation}\label{eq:FLF-projection-def}
\bigl(u-\Pi_M^{\theta,\beta,\gamma}u,w\bigr)_{\varrho^{\theta,\beta,\gamma}}
=
\int_0^\infty
\bigl(u-\Pi_M^{\theta,\beta,\gamma}u\bigr)(x)
w(x)\varrho^{\theta,\beta,\gamma}(x)\,dx
=
0,
\qquad
\forall w\in \mathbb{P}_{M}^{\gamma},
\end{equation}
where
\[
\mathbb{P}_{M}^{\gamma}
:=
\operatorname{span}
\{1,x^\gamma,x^{2\gamma},\ldots,x^{M\gamma}\}.
\]
Using the orthogonality relation \eqref{eq:FLF-orth}, the projection has the
modal representation
\begin{equation}\label{eq:FLF-projection-expansion}
\Pi_M^{\theta,\beta,\gamma}u
=
\sum_{m=0}^{M}
\widehat u_m^{\theta,\beta,\gamma}
\mathscr{L}_{m}^{(\theta,\beta,\gamma)},
\qquad
\widehat u_m^{\theta,\beta,\gamma}
=
\bigl(h_m^{(\theta)}\bigr)^{-1}
\int_0^\infty
u(x)\mathscr{L}_{m}^{(\theta,\beta,\gamma)}(x)
\varrho^{\theta,\beta,\gamma}(x)\,dx .
\end{equation}

The differentiation scale associated with the scaled fractional transformation
\(Y_{\beta,\gamma}(x)=(\beta+1)x^\gamma\) is generated by
\begin{equation}\label{eq:FLF-mapped-derivative-proj}
\mathscr{D}_{\beta,\gamma}u(x)
:=
\frac{x^{1-\gamma}}{(\beta+1)\gamma}\partial_x u(x).
\end{equation}
For an integer \(\mu\ge0\), we introduce the weighted Sobolev-type space
\begin{equation}\label{eq:FLF-weighted-space}
\mathcal{A}_{\theta,\beta,\gamma}^{\mu}(\Lambda)
:=
\left\{
v\in L^2_{\varrho^{\theta,\beta,\gamma}}(\Lambda):
\mathscr{D}_{\beta,\gamma}^{\,r}v
\in L^2_{\varrho^{\theta+r,\beta,\gamma}}(\Lambda),
\quad 1\le r\le \mu
\right\}.
\end{equation}
The associated seminorms and norm are
\[
|v|_{\mathcal{A}_{\theta,\beta,\gamma}^{r}}
:=
\|\mathscr{D}_{\beta,\gamma}^{\,r}v\|_{\varrho^{\theta+r,\beta,\gamma}},
\qquad
\|v\|_{\mathcal{A}_{\theta,\beta,\gamma}^{\mu}}
:=
\left(
\sum_{r=0}^{\mu}
|v|_{\mathcal{A}_{\theta,\beta,\gamma}^{r}}^2
\right)^{1/2}.
\]

\begin{theorem}\label{thm:FLF-projection-estimate}
Let \(\mu,M\in\mathbb{N}\), \(0\le s\le \widehat\mu\), where 
$\widehat\mu:=\min\{\mu,M+1\},$ 
and let \(\theta>-1\), \(\beta>-1\), \(0<\gamma<1\). If
\(u\in\mathcal{A}_{\theta,\beta,\gamma}^{\mu}(\Lambda)\), then
\begin{equation}\label{eq:FLF-projection-estimate}
\left\|
\mathscr{D}_{\beta,\gamma}^{\,s}
\bigl(u-\Pi_M^{\theta,\beta,\gamma}u\bigr)
\right\|_{\varrho^{\theta+s,\beta,\gamma}}
\le
\left[
\frac{(M-\widehat\mu+1)!}{(M-s+1)!}
\right]^{1/2}
\left\|
\mathscr{D}_{\beta,\gamma}^{\,\widehat\mu}u
\right\|_{\varrho^{\theta+\widehat\mu,\beta,\gamma}} .
\end{equation}
In particular, for \(\theta=s=0\) and \(\mu<M+1\),
\begin{equation}\label{eq:FLF-projection-special}
\|u-\Pi_M u\|_{\varrho^{0,\beta,\gamma}}
\le
C M^{-\mu/2}
\left\|
\mathscr{D}_{\beta,\gamma}^{\,\mu}u
\right\|_{\varrho^{\mu,\beta,\gamma}},
\end{equation}
where \(\Pi_M=\Pi_M^{0,\beta,\gamma}\).
\end{theorem}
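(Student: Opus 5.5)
The estimate reduces, through the map \(y=Y_{\beta,\gamma}(x)=(\beta+1)x^\gamma\), to the classical Laguerre projection estimate; I would nonetheless carry it out directly with the modal coefficients. Write \(u=\sum_{m\ge0}\widehat u_m^{\theta,\beta,\gamma}\mathscr{L}_m^{(\theta,\beta,\gamma)}\) in \(L^2_{\varrho^{\theta,\beta,\gamma}}(\Lambda)\). The expansion is complete because, under the change of variable used in the proof of Lemma~\ref{lem:FLF-properties}(iii), \(L^2_{\varrho^{\theta,\beta,\gamma}}(\Lambda)\) is isometric to \(L^2_{y^\theta e^{-y}}(\mathbb{R}^+)\), where the Laguerre polynomials are complete. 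Iterating the derivative identity \(-\mathscr{D}_{\beta,\gamma}\mathscr{L}_m^{(\theta,\beta,\gamma)}=\mathscr{L}_{m-1}^{(\theta+1,\beta,\gamma)}\) of Lemma~\ref{lem:FLF-properties}(ii) gives
\[
\mathscr{D}_{\beta,\gamma}^{\,k}\mathscr{L}_m^{(\theta,\beta,\gamma)}=(-1)^k\mathscr{L}_{m-k}^{(\theta+k,\beta,\gamma)},\qquad m\ge k.
\]
This derivative vanishes for \(m<k\). Since \(\varrho^{\theta+k,\beta,\gamma}\) is exactly the weight for the \((\theta+k)\)-family, orthogonality (iii) gives the Parseval-type identity
\[
\|\mathscr{D}_{\beta,\gamma}^{\,k}u\|^2_{\varrho^{\theta+k,\beta,\gamma}}=\sum_{m\ge k}|\widehat u_m^{\theta,\beta,\gamma}|^2 h^{(\theta+k)}_{m-k},\qquad h^{(\theta+k)}_{m-k}=\frac{\Gamma(m+\theta+1)}{\Gamma(m-k+1)}.
\]
The same identity holds for \(u-\Pi_M^{\theta,\beta,\gamma}u\) with the sum restricted to \(m\ge M+1\).

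The step I expect to be the main obstacle is justifying termwise differentiation of the series, i.e. showing that \(\mathscr{D}^k_{\beta,\gamma}u\in L^2_{\varrho^{\theta+k,\beta,\gamma}}\) has coefficients \((-1)^k\widehat u_{m}^{\theta,\beta,\gamma}\) in the \(\mathscr{L}^{(\theta+k,\beta,\gamma)}_{m-k}\) basis. I would pull everything back to \(y\), where \(\mathscr{D}_{\beta,\gamma}=\partial_y\) and \(\mathcal{A}^{\mu}_{\theta,\beta,\gamma}\) becomes the standard weighted space \(\{v:\partial_y^r v\in L^2_{y^{\theta+r}e^{-y}}\}\). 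There I would compute the coefficient of \(\partial_y v\) against \(L^{(\theta+1)}_{m-1}\) by integration by parts, using the Rodrigues-type relation \(\partial_y\bigl(y^{\theta+1}e^{-y}L^{(\theta+1)}_{m-1}\bigr)=m\,y^\theta e^{-y}L^{(\theta)}_m\) from Appendix~A. The boundary terms vanish: at infinity by the exponential decay, and at \(0\) because of the factor \(y^{\theta+1}\) together with an approximation argument by smooth compactly supported functions.

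With the identity in hand, fix \(s\le\widehat\mu\). Then
\[
\|\mathscr{D}^{s}_{\beta,\gamma}(u-\Pi_M^{\theta,\beta,\gamma}u)\|^2_{\varrho^{\theta+s,\beta,\gamma}}=\sum_{m\ge M+1}\frac{\Gamma(m-\widehat\mu+1)}{\Gamma(m-s+1)}\,|\widehat u_m^{\theta,\beta,\gamma}|^2h^{(\theta+\widehat\mu)}_{m-\widehat\mu},
\]
which is valid since \(m\ge M+1\ge\widehat\mu\). The ratio \((m-\widehat\mu)!/(m-s)!\) is nonincreasing in \(m\) because \(s\le\widehat\mu\), so it is bounded by its value at \(m=M+1\), namely \((M-\widehat\mu+1)!/(M-s+1)!\). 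Bounding the remaining sum by the full series \(\|\mathscr{D}^{\widehat\mu}_{\beta,\gamma}u\|^2_{\varrho^{\theta+\widehat\mu,\beta,\gamma}}\) and taking square roots gives \eqref{eq:FLF-projection-estimate}.

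For \eqref{eq:FLF-projection-special}, take \(\theta=s=0\) and \(\mu<M+1\), so that \(\widehat\mu=\mu\). The constant becomes
\[
\frac{(M-\mu+1)!}{(M+1)!}=\prod_{j=0}^{\mu-1}(M+1-j)^{-1}.
\]
Each factor \(M+1-j\) is at least \(M+2-\mu\ge1\) and is comparable to \(M\) for \(M\ge2\mu\), so the product is at most \(C_\mu M^{-\mu}\). For the finitely many remaining \(M\) the product is at most \(1\le (2\mu)^\mu M^{-\mu}\), which is absorbed into \(C\). Taking square roots yields the rate \(CM^{-\mu/2}\).
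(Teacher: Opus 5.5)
Your proposal is correct and follows the paper's proof: you expand in \(\mathscr{L}_m^{(\theta,\beta,\gamma)}\), iterate the derivative identity to get the Parseval identity \(\|\mathscr{D}_{\beta,\gamma}^{\,k}u\|^2_{\varrho^{\theta+k,\beta,\gamma}}=\sum_{m\ge k}h^{(\theta+k)}_{m-k}|\widehat u_m^{\theta,\beta,\gamma}|^2\), and bound the tail by the ratio \(h^{(\theta+s)}_{m-s}/h^{(\theta+\widehat\mu)}_{m-\widehat\mu}\) evaluated at \(m=M+1\). Beyond the paper, you justify completeness, termwise differentiation and the monotonicity of that ratio (the paper only asserts the last), and for the special case your elementary product bound on \((M-\mu+1)!/(M+1)!\) replaces the paper's gamma-ratio estimate without changing the argument.
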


\begin{proof}
By \eqref{eq:FLF-deriv} and the definition of
\(\mathscr{D}_{\beta,\gamma}\), repeated application gives
\begin{equation}\label{eq:FLF-derivative-power}
\mathscr{D}_{\beta,\gamma}^{\,r}
\mathscr{L}_{m}^{(\theta,\beta,\gamma)}(x)
=
(-1)^r
\mathscr{L}_{m-r}^{(\theta+r,\beta,\gamma)}(x),
\qquad 0\le r\le m .
\end{equation}
The sign has no effect on the squared norms. Hence, if
\[
u(x)
=
\sum_{m=0}^{\infty}
\widehat u_m^{\theta,\beta,\gamma}
\mathscr{L}_{m}^{(\theta,\beta,\gamma)}(x),
\]
then the orthogonality relation \eqref{eq:FLF-orth} gives, for \(r\ge1\),
\[
\left\|
\mathscr{D}_{\beta,\gamma}^{\,r}u
\right\|_{\varrho^{\theta+r,\beta,\gamma}}^2
=
\sum_{m=r}^{\infty}
h_{m-r}^{(\theta+r)}
\left|\widehat u_m^{\theta,\beta,\gamma}\right|^2 .
\]
Therefore,
\[
\begin{aligned}
&
\left\|
\mathscr{D}_{\beta,\gamma}^{\,s}
\bigl(u-\Pi_M^{\theta,\beta,\gamma}u\bigr)
\right\|_{\varrho^{\theta+s,\beta,\gamma}}^2
\\
&\qquad =
\sum_{m=M+1}^{\infty}
h_{m-s}^{(\theta+s)}
\left|\widehat u_m^{\theta,\beta,\gamma}\right|^2
\\
&\qquad \le
\max_{m\ge M+1}
\frac{
h_{m-s}^{(\theta+s)}
}{
h_{m-\widehat\mu}^{(\theta+\widehat\mu)}
}
\sum_{m=M+1}^{\infty}
h_{m-\widehat\mu}^{(\theta+\widehat\mu)}
\left|\widehat u_m^{\theta,\beta,\gamma}\right|^2
\\
&\qquad \le
\frac{
h_{M+1-s}^{(\theta+s)}
}{
h_{M+1-\widehat\mu}^{(\theta+\widehat\mu)}
}
\left\|
\mathscr{D}_{\beta,\gamma}^{\,\widehat\mu}u
\right\|_{\varrho^{\theta+\widehat\mu,\beta,\gamma}}^2 .
\end{aligned}
\]
Using
\[
h_m^{(\theta)}
=
\frac{\Gamma(m+\theta+1)}{\Gamma(m+1)},
\]
we obtain
\[
\frac{
h_{M+1-s}^{(\theta+s)}
}{
h_{M+1-\widehat\mu}^{(\theta+\widehat\mu)}
}
=
\frac{(M-\widehat\mu+1)!}{(M-s+1)!}.
\]
Substitution into the preceding inequality proves
\eqref{eq:FLF-projection-estimate}.

To derive the special case \eqref{eq:FLF-projection-special}, we use the
standard gamma-ratio estimate: for \(\xi,\zeta\in\mathbb{R}\),
\(\kappa\in\mathbb{N}\), \(\kappa+\xi>1\), and \(\kappa+\zeta>1\),
\begin{equation}\label{eq:FLF-gamma-ratio-bound}
\frac{\Gamma(\kappa+\xi)}{\Gamma(\kappa+\zeta)}
\le
\mathfrak{c}_{\kappa}^{\xi,\zeta}\kappa^{\xi-\zeta},
\end{equation}
where
\begin{equation}\label{eq:FLF-gamma-ratio-constant}
\mathfrak{c}_{\kappa}^{\xi,\zeta}
=
\exp
\left(
\frac{\xi-\zeta}{2(\kappa+\zeta-1)}
+
\frac{1}{12(\kappa+\xi-1)}
+
\frac{(\xi-\zeta)^2}{\kappa}
\right).
\end{equation}
Applying \eqref{eq:FLF-gamma-ratio-bound} to the factorial ratio in
\eqref{eq:FLF-projection-estimate} gives
\[
\frac{(M-\mu+1)!}{(M+1)!}
\le C M^{-\mu}.
\]
Consequently,
\[
\|u-\Pi_Mu\|_{\varrho^{0,\beta,\gamma}}
\le
C M^{-\mu/2}
\|\mathscr{D}_{\beta,\gamma}^{\,\mu}u\|_{\varrho^{\mu,\beta,\gamma}} .
\]
This proves \eqref{eq:FLF-projection-special}.
\end{proof}

\subsection{Interpolation estimate}
\label{subsec:FLF-interpolation-estimate}

Let \(\{x_i^{(\theta,\beta,\gamma)}\}_{i=0}^{M}\) be the scaled fractional
Laguerre--Gauss points defined in \eqref{eq:FLF-nodes}. For
\(0\le i\le M\), define the scaled fractional Lagrange functions by
\begin{equation}\label{eq:FLF-mapped-Lagrange}
\ell_i^{(\theta,\beta,\gamma)}\!\left(Y_{\beta,\gamma}(x)\right)
:=
\frac{
\displaystyle
\prod_{\substack{r=0\\ r\ne i}}^{M}
\left(
Y_{\beta,\gamma}(x)-Y_{\beta,\gamma}\!\left(x_r^{(\theta,\beta,\gamma)}\right)
\right)
}{
\displaystyle
\prod_{\substack{r=0\\ r\ne i}}^{M}
\left(
Y_{\beta,\gamma}\!\left(x_i^{(\theta,\beta,\gamma)}\right)
-
Y_{\beta,\gamma}\!\left(x_r^{(\theta,\beta,\gamma)}\right)
\right)
}.
\end{equation}
Since \(Y_{\beta,\gamma}(x)=(\beta+1)x^\gamma\) and 
$Y_{\beta,\gamma}\!\left(x_i^{(\theta,\beta,\gamma)}\right)
=
y_i^{(\theta)},$ 
this can be written as
\begin{equation}\label{eq:FLF-mapped-Lagrange-power-simplified}
\ell_i^{(\theta,\beta,\gamma)}\!\left(Y_{\beta,\gamma}(x)\right)
=
\frac{
\displaystyle
\prod_{\substack{r=0\\ r\ne i}}^{M}
\left(
x^\gamma-
\left(x_r^{(\theta,\beta,\gamma)}\right)^\gamma
\right)
}{
\displaystyle
\prod_{\substack{r=0\\ r\ne i}}^{M}
\left(
\left(x_i^{(\theta,\beta,\gamma)}\right)^\gamma
-
\left(x_r^{(\theta,\beta,\gamma)}\right)^\gamma
\right)
}.
\end{equation}
The interpolation operator
$\mathcal{I}_{M}^{\theta,\beta,\gamma}:C(\Lambda)\longrightarrow
\mathbb{P}_{M}^{\gamma}$
is defined by
\begin{equation}\label{eq:FLF-interpolation-operator}
\mathcal{I}_{M}^{\theta,\beta,\gamma}v(x)
=
\sum_{i=0}^{M}
v\!\left(x_i^{(\theta,\beta,\gamma)}\right)
\ell_i^{(\theta,\beta,\gamma)}\!\left(Y_{\beta,\gamma}(x)\right).
\end{equation}
Thus
$\mathcal{I}_{M}^{\theta,\beta,\gamma}v
\!\left(x_i^{(\theta,\beta,\gamma)}\right)
=
v\!\left(x_i^{(\theta,\beta,\gamma)}\right),
\qquad 0\le i\le M .$

We first state a stability estimate.

\begin{theorem}\label{thm:FLF-interpolation-stability}
Let \(\theta,\ \beta>-1\), and \(\gamma \in (0,1)\). If
\(v\in C(\Lambda)\cap\mathcal{A}_{\theta,\beta,\gamma}^{1}(\Lambda)\) and
\(\mathscr{D}_{\beta,\gamma}v\in L^2_{\varrho^{\theta,\beta,\gamma}}(\Lambda)\),
then
\begin{equation}\label{eq:FLF-interpolation-stability}
\left\|
\mathcal{I}_{M}^{\theta,\beta,\gamma}v
\right\|_{\varrho^{\theta,\beta,\gamma}}
\le
C
\left[
M^{-1/2}
\left\|
\mathscr{D}_{\beta,\gamma}v
\right\|_{\varrho^{\theta,\beta,\gamma}}
+
2\sqrt{\log M}
\left\|
v
\right\|_{\mathcal{A}_{\theta,\beta,\gamma}^{1}}
\right].
\end{equation}
\end{theorem}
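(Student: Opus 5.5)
The plan is to transfer everything to the Laguerre variable $y=Y_{\beta,\gamma}(x)=(\beta+1)x^\gamma$ and invoke the classical stability result for Laguerre--Gauss interpolation. Set $V(y):=v\bigl((y/(\beta+1))^{1/\gamma}\bigr)$. By the change of variables in the proof of Lemma~\ref{lem:FLF-properties}(iii), for any $w$ we have
\[
\int_0^\infty |w(x)|^2\varrho^{\theta,\beta,\gamma}(x)\,dx=\int_0^\infty |W(y)|^2 y^\theta e^{-y}\,dy ,
\]
with $W$ defined from $w$ in the same way. By Lemma~\ref{lem:FLF-properties}(ii), $\mathscr{D}_{\beta,\gamma}v$ corresponds to $\partial_yV$. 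Moreover, by \eqref{eq:FLF-mapped-Lagrange-power-simplified}, $\mathcal{I}_M^{\theta,\beta,\gamma}v$ corresponds exactly to $I_M^{(\theta)}V$, the classical Lagrange interpolant of $V$ at the Laguerre--Gauss nodes $\{y_i^{(\theta)}\}$. Consequently $\|\mathcal{I}_M^{\theta,\beta,\gamma}v\|_{\varrho^{\theta,\beta,\gamma}}=\|I_M^{(\theta)}V\|_{\omega_\theta}$ with $\omega_\theta=y^\theta e^{-y}$. Similarly, $\|\mathscr{D}_{\beta,\gamma}v\|_{\varrho^{\theta,\beta,\gamma}}=\|\partial_yV\|_{\omega_\theta}$, and $\|v\|_{\mathcal{A}^1}$ becomes $\|V\|_{\omega_\theta}+\|\partial_yV\|_{\omega_{\theta+1}}$.

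The statement then reduces to the known Laguerre interpolation stability bound
\[
\|I_M^{(\theta)}V\|_{\omega_\theta}\le C\bigl(M^{-1/2}\|\partial_yV\|_{\omega_\theta}+\sqrt{\log M}\,\|V\|_{1,\omega_{\theta+1}}\bigr).
\]
If I cannot simply cite it, I would prove it as follows. Gauss exactness (Lemma~\ref{lem:FLF-properties}(v) with $q=(I_MV)^2\in\mathbb{P}^\gamma_{2M}$) gives
\[
\|I_MV\|^2_{\omega_\theta}=\sum_{i=0}^M V(y_i)^2\varpi_i^{(\theta)} .
\]
Next I split the nodes into those with $y_i\le 1/M$ (roughly) and the rest, using the asymptotics $\varpi_i\sim y_i^{\theta} e^{-y_i}\Delta y_i$ with spacing $\Delta y_i\sim \sqrt{y_i/M}$. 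For each node I bound $V(y_i)^2$ by a one-dimensional Sobolev inequality on the subinterval $I_i$ around $y_i$ of length $\Delta y_i$:
\[
V(y_i)^2\le C\Bigl(\Delta y_i^{-1}\int_{I_i}V^2\,dy+\Delta y_i\int_{I_i}(\partial_yV)^2\,dy\Bigr).
\]
After multiplying by $\varpi_i$, the first term gives $\|V\|^2_{\omega_\theta}$. Near the origin, the second term with $\Delta y_i\lesssim M^{-1}$ gives the $M^{-1}\|\partial_yV\|^2_{\omega_\theta}$ contribution. The remaining derivative terms must be converted to the weight $\omega_{\theta+1}$, which costs the factor $y^{-1}\Delta y_i\cdot(\ldots)$. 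Summing over a dyadic-type decomposition of $(0,\max y_i)$ produces the $\log M$ factor, since $\int_{1/M}^{4M} y^{-1}\,dy\sim\log M$.

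The main obstacle is this last accounting step: controlling the Christoffel weights $\varpi_i$ and node spacings uniformly, including near the largest node $y_M\sim4M$ where the exponential weight and Airy-type spacing behave differently, so that the derivative contribution indeed carries only a $\log M$ loss. I would handle it with the standard bounds $\varpi_i\le C y_i^\theta e^{-y_i}\,\Delta y_i$ and $\Delta y_i\le C\sqrt{y_i/M}$ from Laguerre asymptotics (as in Guo--Shen type estimates). Converting back to $x$ by the isometry above then yields \eqref{eq:FLF-interpolation-stability}, with the factor $2$ absorbed into $C$.
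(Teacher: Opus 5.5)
Your proposal follows the paper's proof: both pull back through $y=(\beta+1)x^\gamma$, identify $\mathcal{I}_M^{\theta,\beta,\gamma}$ with Laguerre--Gauss interpolation at $\{y_i^{(\theta)}\}$, $\mathscr{D}_{\beta,\gamma}$ with $\partial_y$, and $\varrho^{\theta+r,\beta,\gamma}\,dx$ with $y^{\theta+r}e^{-y}\,dy$, and then invoke the classical Laguerre interpolation stability bound, which the paper cites without proof, so your fallback sketch goes beyond what the paper does. Read your $\|V\|_{1,\omega_{\theta+1}}$ as $\bigl(\|V\|_{\omega_\theta}^2+\|\partial_yV\|_{\omega_{\theta+1}}^2\bigr)^{1/2}$, which is exactly the paper's $\mathcal{M}_2(\widetilde v)^{1/2}$ and equals $\|v\|_{\mathcal{A}_{\theta,\beta,\gamma}^{1}}$.
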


\begin{proof}
Set
\[
x(y)=\left(\frac{y}{\beta+1}\right)^{1/\gamma},
\qquad
\widetilde v(y)=v(x(y)).
\]
Then \eqref{eq:FLF-interpolation-operator} is the pullback of the
Laguerre--Gauss interpolation operator:
\[
\mathcal{I}_{M}^{\theta,\beta,\gamma}v(x)
=
\mathcal{I}_{M}^{\theta}\widetilde v(y)
:=
\sum_{i=0}^{M}
\widetilde v\!\left(y_i^{(\theta)}\right)\ell_i(y),
\qquad y=Y_{\beta,\gamma}(x).
\]
The Laguerre interpolation stability estimate gives
\[
\left\|
\mathcal{I}_{M}^{\theta}\widetilde v
\right\|_{y^\theta e^{-y}}
\le
C\left(
M^{-1/2}\mathcal{M}_1(\widetilde v)^{1/2}
+
2\sqrt{\log M}\,\mathcal{M}_2(\widetilde v)^{1/2}
\right),
\]
where
\[
\mathcal{M}_1(\widetilde v)
=
\int_0^\infty
(\partial_y\widetilde v(y))^2y^\theta e^{-y}\,dy,
\]
and
\[
\mathcal{M}_2(\widetilde v)
=
\int_0^\infty
\left(
\widetilde v^2
+
y(\partial_y\widetilde v)^2
\right)y^\theta e^{-y}\,dy .
\]

The transformation \(y=(\beta+1)x^\gamma\) gives
\[
dy=(\beta+1)\gamma x^{\gamma-1}\,dx,
\qquad
e^{-y}=e^{-(\beta+1)x^\gamma},
\]
and
\[
\partial_y\widetilde v(y)
=
\frac{x^{1-\gamma}}{(\beta+1)\gamma}\partial_xv(x)
=
\mathscr{D}_{\beta,\gamma}v(x).
\]
Consequently,
\[
\mathcal{M}_1(\widetilde v)
=
\int_0^\infty
\left(\mathscr{D}_{\beta,\gamma}v(x)\right)^2
\varrho^{\theta,\beta,\gamma}(x)\,dx
=
\left\|
\mathscr{D}_{\beta,\gamma}v
\right\|_{\varrho^{\theta,\beta,\gamma}}^2 .
\]
Similarly,
\[
\begin{aligned}
\mathcal{M}_2(\widetilde v)
&=
\int_0^\infty
\left[
v^2+
(\beta+1)x^\gamma\left(\mathscr{D}_{\beta,\gamma}v\right)^2
\right]
\varrho^{\theta,\beta,\gamma}(x)\,dx
\\
&=
\|v\|_{\varrho^{\theta,\beta,\gamma}}^2
+
\|\mathscr{D}_{\beta,\gamma}v\|_{\varrho^{\theta+1,\beta,\gamma}}^2
\le
\left\|
v
\right\|_{\mathcal{A}_{\theta,\beta,\gamma}^{1}}^2 .
\end{aligned}
\]
Combining the last two identities with the Laguerre interpolation stability
bound proves \eqref{eq:FLF-interpolation-stability}.
\end{proof}

The next theorem gives the corresponding interpolation error estimate.

\begin{theorem}\label{thm:FLF-interpolation-error}
Let \(\mu,M\in\mathbb{N}\), \(\theta>-1\), \(\beta>-1\), \(0<\gamma<1\), and
$\widehat\mu:=\min\{\mu,M+1\}.$

If \(v\in C(\Lambda)\cap \mathcal{A}_{\theta,\beta,\gamma}^{\mu}(\Lambda)\) and
\(\mathscr{D}_{\beta,\gamma}v\in
\mathcal{A}_{\theta,\beta,\gamma}^{\mu-1}(\Lambda)\), then
\begin{equation}\label{eq:FLF-interpolation-error}
\left\|
\mathcal{I}_{M}^{\theta,\beta,\gamma}v-v
\right\|_{\varrho^{\theta,\beta,\gamma}}
\le
C
\left[
\frac{(M+1-\widehat\mu)!}
{M!}
\right]^{1/2}
\left[
\left\|
\mathscr{D}_{\beta,\gamma}^{\,\widehat\mu}v
\right\|_{\varrho^{\theta+\mu-1,\beta,\gamma}}
+
2\sqrt{\log M}
\left\|
\mathscr{D}_{\beta,\gamma}^{\,\widehat\mu}v
\right\|_{\varrho^{\theta+\mu,\beta,\gamma}}
\right].
\end{equation}
\end{theorem}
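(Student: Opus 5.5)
The plan is the standard projection-plus-stability splitting. I work with an auxiliary projection and insert it between $v$ and its interpolant. Because $\mathcal{I}_{M}^{\theta,\beta,\gamma}$ reproduces $\mathbb{P}_{M}^{\gamma}$, the function $w:=v-\Pi_M^{\theta,\beta,\gamma}v$ satisfies $\mathcal{I}_{M}^{\theta,\beta,\gamma}\Pi_M^{\theta,\beta,\gamma}v=\Pi_M^{\theta,\beta,\gamma}v$. Hence
\[
\mathcal{I}_{M}^{\theta,\beta,\gamma}v-v
=
\mathcal{I}_{M}^{\theta,\beta,\gamma}w-w .
\]
The triangle inequality then reduces the task to bounding $\|w\|_{\varrho^{\theta,\beta,\gamma}}$ and $\|\mathcal{I}_{M}^{\theta,\beta,\gamma}w\|_{\varrho^{\theta,\beta,\gamma}}$. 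For the second term I apply Theorem~\ref{thm:FLF-interpolation-stability} to $w$. This gives a bound by
\[
M^{-1/2}\|\mathscr{D}_{\beta,\gamma}w\|_{\varrho^{\theta,\beta,\gamma}}
+2\sqrt{\log M}\,\bigl(\|w\|_{\varrho^{\theta,\beta,\gamma}}+\|\mathscr{D}_{\beta,\gamma}w\|_{\varrho^{\theta+1,\beta,\gamma}}\bigr).
\]
Here I use the sharper form $\mathcal{M}_2=\|w\|^2_{\varrho^{\theta,\beta,\gamma}}+\|\mathscr{D}_{\beta,\gamma}w\|^2_{\varrho^{\theta+1,\beta,\gamma}}$ that appears in that proof, rather than the full $\mathcal{A}^1$ norm.

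The terms $\|w\|_{\varrho^{\theta,\beta,\gamma}}$ and $\|\mathscr{D}_{\beta,\gamma}w\|_{\varrho^{\theta+1,\beta,\gamma}}$ follow directly from Theorem~\ref{thm:FLF-projection-estimate} with $s=0$ and $s=1$. They give the factors $[(M-\widehat\mu+1)!/(M+1)!]^{1/2}$ and $[(M-\widehat\mu+1)!/M!]^{1/2}$ times $\|\mathscr{D}_{\beta,\gamma}^{\widehat\mu}v\|_{\varrho^{\theta+\widehat\mu,\beta,\gamma}}$. Both are at most the factor in \eqref{eq:FLF-interpolation-error}, and together they produce the $\sqrt{\log M}$ term.

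The remaining term, $M^{-1/2}\|\mathscr{D}_{\beta,\gamma}w\|_{\varrho^{\theta,\beta,\gamma}}$, is the main obstacle. Its weight is $\theta$, not $\theta+1$, so the projection estimate does not apply directly. The hypothesis $\mathscr{D}_{\beta,\gamma}v\in\mathcal{A}^{\mu-1}_{\theta,\beta,\gamma}$ is there to handle it. I use \eqref{eq:FLF-deriv}: $\mathscr{D}_{\beta,\gamma}\mathscr{L}^{(\theta,\beta,\gamma)}_m=-\sum_{r<m}\mathscr{L}^{(\theta,\beta,\gamma)}_r$. This gives $\mathscr{D}_{\beta,\gamma}\Pi_M^{\theta,\beta,\gamma}v\in\mathbb{P}^{\gamma}_{M-1}$, and $\mathscr{D}_{\beta,\gamma}w$ can be compared with $(I-\Pi_{M-1}^{\theta,\beta,\gamma})\mathscr{D}_{\beta,\gamma}v$ in the $\theta$-weighted norm.

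If that comparison is awkward, I instead replace $\Pi_M^{\theta,\beta,\gamma}$ by an $H^1$-type projection in the mapped variable. In the $y$-variable this is the standard Laguerre $H^1$ projection, so the classical estimate pulls back through $y=(\beta+1)x^\gamma$ exactly as in the proof of Theorem~\ref{thm:FLF-interpolation-stability}. Either route applies Theorem~\ref{thm:FLF-projection-estimate} to $\mathscr{D}_{\beta,\gamma}v$ with $\mu-1$ derivatives and $M-1$ modes. This yields a factor of order $[(M-\widehat\mu+1)!/(M-1)!]^{1/2}$ times $\|\mathscr{D}^{\widehat\mu}_{\beta,\gamma}v\|_{\varrho^{\theta+\widehat\mu-1,\beta,\gamma}}$. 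After multiplying by $M^{-1/2}$, this is bounded by $C[(M+1-\widehat\mu)!/M!]^{1/2}$ times the first norm on the right of \eqref{eq:FLF-interpolation-error}.

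Finally I collect the three contributions and absorb the numerical constants into $C$, using the gamma-ratio bound \eqref{eq:FLF-gamma-ratio-bound} wherever factorials must be compared. The step I expect to be hardest is controlling $\|\mathscr{D}_{\beta,\gamma}w\|_{\varrho^{\theta,\beta,\gamma}}$, because the weight mismatch makes the reduction to Theorem~\ref{thm:FLF-projection-estimate} indirect.
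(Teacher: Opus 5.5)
Your outline matches the paper's proof. You insert $\Pi_M^{\theta,\beta,\gamma}$ and apply Theorem~\ref{thm:FLF-interpolation-stability} to $w=v-\Pi_M^{\theta,\beta,\gamma}v$. You control $\|w\|_{\varrho^{\theta,\beta,\gamma}}$ and $\|\mathscr{D}_{\beta,\gamma}w\|_{\varrho^{\theta+1,\beta,\gamma}}$ by Theorem~\ref{thm:FLF-projection-estimate} with $s=0,1$. You then reduce the $\theta$-weighted derivative term to $(I-\Pi_{M-1}^{\theta,\beta,\gamma})\mathscr{D}_{\beta,\gamma}v$, which the hypothesis $\mathscr{D}_{\beta,\gamma}v\in\mathcal{A}^{\mu-1}_{\theta,\beta,\gamma}(\Lambda)$ lets Theorem~\ref{thm:FLF-projection-estimate} handle. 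The gap is that the step you call the main obstacle is only asserted (``can be compared''). That comparison is not $O(1)$: in general it costs a factor $\sqrt{M}$, and the $M^{-1/2}$ in the stability bound is exactly what absorbs it. Knowing $\mathscr{D}_{\beta,\gamma}\Pi_M^{\theta,\beta,\gamma}v\in\mathbb{P}^{\gamma}_{M-1}$ is not enough; you need the explicit coefficients.

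Concretely, write $v=\sum_m\widehat v_m\mathscr{L}_m^{(\theta,\beta,\gamma)}$ and set $\widehat v_{1,r}:=-\sum_{m>r}\widehat v_m$. Then \eqref{eq:FLF-deriv} gives $\mathscr{D}_{\beta,\gamma}v=\sum_{r\ge0}\widehat v_{1,r}\mathscr{L}_r^{(\theta,\beta,\gamma)}$ and $\mathscr{D}_{\beta,\gamma}\Pi_M^{\theta,\beta,\gamma}v=\sum_{r=0}^{M-1}(\widehat v_{1,r}-\widehat v_{1,M})\mathscr{L}_r^{(\theta,\beta,\gamma)}$. Hence
\[
\mathscr{D}_{\beta,\gamma}w-(I-\Pi_{M-1}^{\theta,\beta,\gamma})\mathscr{D}_{\beta,\gamma}v
=\widehat v_{1,M}\sum_{r=0}^{M-1}\mathscr{L}_r^{(\theta,\beta,\gamma)} .
\]
Since $h_M^{(\theta)}|\widehat v_{1,M}|^2\le\|(I-\Pi_{M-1}^{\theta,\beta,\gamma})\mathscr{D}_{\beta,\gamma}v\|^2_{\varrho^{\theta,\beta,\gamma}}$, this rank-one term has norm at most $\mathfrak{s}_M^{1/2}\|(I-\Pi_{M-1}^{\theta,\beta,\gamma})\mathscr{D}_{\beta,\gamma}v\|_{\varrho^{\theta,\beta,\gamma}}$. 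Here $\mathfrak{s}_M=\sum_{r=0}^{M}h_r^{(\theta)}/h_M^{(\theta)}\le CM$, by monotonicity of $h_r^{(\theta)}$ if $\theta\ge0$ and by Stirling if $-1<\theta<0$. The paper does the same computation, splitting through $\Pi_M^{\theta,\beta,\gamma}\mathscr{D}_{\beta,\gamma}v$ instead.

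This is also where your $(M-1)!$ actually comes from. Theorem~\ref{thm:FLF-projection-estimate} applied to $\mathscr{D}_{\beta,\gamma}v$ with $M-1$ modes gives $[(M-\widehat\mu+1)!/M!]^{1/2}$. Only the extra $\sqrt{M}$ turns it into $[(M-\widehat\mu+1)!/(M-1)!]^{1/2}$, so as written you attribute to the theorem a factor it does not produce. With this step supplied your argument closes. The $H^1$-projection fallback is then unnecessary, and it would need external estimates, including for the $s=0,1$ terms.

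One caveat applies to the paper as well. Both arguments yield the weights $\varrho^{\theta+\widehat\mu-1,\beta,\gamma}$ and $\varrho^{\theta+\widehat\mu,\beta,\gamma}$, which match the statement only when $\mu\le M+1$. Otherwise $\varrho^{\theta+\mu,\beta,\gamma}=Y_{\beta,\gamma}^{\,\mu-\widehat\mu}\varrho^{\theta+\widehat\mu,\beta,\gamma}$, and the two norms are not comparable.
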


\begin{proof}
By the triangle inequality,
\begin{equation}\label{eq:FLF-interpolation-proof-split}
\left\|
\mathcal{I}_{M}^{\theta,\beta,\gamma}v-v
\right\|_{\varrho^{\theta,\beta,\gamma}}
\le
\left\|
\mathcal{I}_{M}^{\theta,\beta,\gamma}v
-
\Pi_M^{\theta,\beta,\gamma}v
\right\|_{\varrho^{\theta,\beta,\gamma}}
+
\left\|
\Pi_M^{\theta,\beta,\gamma}v-v
\right\|_{\varrho^{\theta,\beta,\gamma}} .
\end{equation}
The second term is bounded by Theorem~\ref{thm:FLF-projection-estimate}. For
the first term, the interpolation stability estimate gives
\[
\begin{aligned}
&
\left\|
\mathcal{I}_{M}^{\theta,\beta,\gamma}v
-
\Pi_M^{\theta,\beta,\gamma}v
\right\|_{\varrho^{\theta,\beta,\gamma}}
\\
&\qquad =
\left\|
\mathcal{I}_{M}^{\theta,\beta,\gamma}
\left(v-\Pi_M^{\theta,\beta,\gamma}v\right)
\right\|_{\varrho^{\theta,\beta,\gamma}}
\\
&\qquad \le
C
\left[
M^{-1/2}
\left\|
\mathscr{D}_{\beta,\gamma}
\left(v-\Pi_M^{\theta,\beta,\gamma}v\right)
\right\|_{\varrho^{\theta,\beta,\gamma}}
+
2\sqrt{\log M}
\left\|
v-\Pi_M^{\theta,\beta,\gamma}v
\right\|_{\mathcal{A}_{\theta,\beta,\gamma}^{1}}
\right].
\end{aligned}
\]
The terms involving
\(\|v-\Pi_M^{\theta,\beta,\gamma}v\|_{\mathcal{A}_{\theta,\beta,\gamma}^{r}}\),
\(r=0,1\), are controlled by Theorem~\ref{thm:FLF-projection-estimate}.
It remains to estimate the derivative term in the same weighted space. We write
\begin{equation}\label{eq:FLF-derivative-projection-split}
\begin{aligned}
&
\left\|
\mathscr{D}_{\beta,\gamma}
\left(v-\Pi_M^{\theta,\beta,\gamma}v\right)
\right\|_{\varrho^{\theta,\beta,\gamma}}
\\
&\qquad \le
\left\|
\mathscr{D}_{\beta,\gamma}v
-
\Pi_M^{\theta,\beta,\gamma}\{\mathscr{D}_{\beta,\gamma}v\}
\right\|_{\varrho^{\theta,\beta,\gamma}}
+
\left\|
\Pi_M^{\theta,\beta,\gamma}\{\mathscr{D}_{\beta,\gamma}v\}
-
\mathscr{D}_{\beta,\gamma}\{\Pi_M^{\theta,\beta,\gamma}v\}
\right\|_{\varrho^{\theta,\beta,\gamma}} .
\end{aligned}
\end{equation}

Let
\[
v(x)=
\sum_{m=0}^{\infty}
\widehat v_m^{\theta,\beta,\gamma}
\mathscr{L}_{m}^{(\theta,\beta,\gamma)}(x).
\]
Using \eqref{eq:FLF-deriv}, we obtain
\[
\begin{aligned}
\mathscr{D}_{\beta,\gamma}v(x)
&=
-\sum_{m=1}^{\infty}
\widehat v_m^{\theta,\beta,\gamma}
\sum_{r=0}^{m-1}
\mathscr{L}_{r}^{(\theta,\beta,\gamma)}(x)
\\
&=
-\sum_{r=0}^{\infty}
\left[
\sum_{m=r+1}^{\infty}
\widehat v_m^{\theta,\beta,\gamma}
\right]
\mathscr{L}_{r}^{(\theta,\beta,\gamma)}(x).
\end{aligned}
\]
Hence
\[
\Pi_M^{\theta,\beta,\gamma}\{\mathscr{D}_{\beta,\gamma}v\}(x)
=
\sum_{r=0}^{M}
\widehat v_{1,r}^{\theta,\beta,\gamma}
\mathscr{L}_{r}^{(\theta,\beta,\gamma)}(x),
\]
where
\[
\widehat v_{1,r}^{\theta,\beta,\gamma}
:=
-
\sum_{m=r+1}^{\infty}
\widehat v_m^{\theta,\beta,\gamma}.
\]
Similarly,
\[
\mathscr{D}_{\beta,\gamma}\{\Pi_M^{\theta,\beta,\gamma}v\}(x)
=
\sum_{r=0}^{M-1}
\left(
\widehat v_{1,r}^{\theta,\beta,\gamma}
-
\widehat v_{1,M}^{\theta,\beta,\gamma}
\right)
\mathscr{L}_{r}^{(\theta,\beta,\gamma)}(x).
\]
Therefore,
\begin{equation}\label{eq:FLF-projection-commutator-bound}
\begin{aligned}
&
\left\|
\Pi_M^{\theta,\beta,\gamma}\{\mathscr{D}_{\beta,\gamma}v\}
-
\mathscr{D}_{\beta,\gamma}\{\Pi_M^{\theta,\beta,\gamma}v\}
\right\|_{\varrho^{\theta,\beta,\gamma}}^2
\\
&\qquad =
\sum_{r=0}^{M}
h_r^{(\theta)}
\left(
\widehat v_{1,M}^{\theta,\beta,\gamma}
\right)^2
\\
&\qquad =
h_M^{(\theta)}
\left(
\widehat v_{1,M}^{\theta,\beta,\gamma}
\right)^2
\sum_{r=0}^{M}
\frac{h_r^{(\theta)}}{h_M^{(\theta)}}
\\
&\qquad \le
\left\|
\mathscr{D}_{\beta,\gamma}v
-
\Pi_{M-1}^{\theta,\beta,\gamma}\{\mathscr{D}_{\beta,\gamma}v\}
\right\|_{\varrho^{\theta,\beta,\gamma}}^2
\sum_{r=0}^{M}
\frac{h_r^{(\theta)}}{h_M^{(\theta)}} .
\end{aligned}
\end{equation}
It remains to bound
\[
\mathfrak{s}_M
:=
\sum_{r=0}^{M}
\frac{h_r^{(\theta)}}{h_M^{(\theta)}} .
\]
If \(\theta\ge0\), then \(h_r^{(\theta)}\le h_M^{(\theta)}\) for \(0\le r\le M\),
and hence \(\mathfrak{s}_M\le M+1\). If \(-1<\theta<0\), Stirling's formula
gives, for sufficiently large \(r\le M\),
\[
\frac{h_r^{(\theta)}}{h_M^{(\theta)}}
=
\frac{\Gamma(M+1)\Gamma(r+\theta+1)}
{\Gamma(M+\theta+1)\Gamma(r+1)}
\sim
M^{-\theta}r^\theta .
\]
Consequently,
\[
\mathfrak{s}_M
\le
M^{-\theta}
\left(
C+
C\sum_{r=1}^{M}r^\theta
\right)
\le CM .
\]
Combining this bound with
\eqref{eq:FLF-interpolation-proof-split}--\eqref{eq:FLF-projection-commutator-bound}
and Theorem~\ref{thm:FLF-projection-estimate} yields
\eqref{eq:FLF-interpolation-error}.
\end{proof}

\begin{remark}\label{rem:FLF-quadrature-error}
Let
\(\{x_i^{(\theta,\beta,\gamma)}\}_{i=0}^{M}\) be the scaled fractional
Laguerre--Gauss nodes defined in \eqref{eq:FLF-nodes}. Then
\begin{equation}\label{eq:FLF-quadrature-error}
\left|
\int_0^\infty
v(x)\varrho^{\theta,\beta,\gamma}(x)\,dx
-
\sum_{i=0}^{M}
v\!\left(x_i^{(\theta,\beta,\gamma)}\right)
\varpi_i^{(\theta)}
\right|
\le
\sqrt{\Gamma(\theta+1)}
\left\|
\mathcal{I}_{M}^{\theta,\beta,\gamma}v-v
\right\|_{\varrho^{\theta,\beta,\gamma}} .
\end{equation}
Indeed,
\[
\sum_{i=0}^{M}
v\!\left(x_i^{(\theta,\beta,\gamma)}\right)
\varpi_i^{(\theta)}
=
\int_0^\infty
\mathcal{I}_{M}^{\theta,\beta,\gamma}v(x)
\varrho^{\theta,\beta,\gamma}(x)\,dx,
\]
while
\[
\int_0^\infty
\varrho^{\theta,\beta,\gamma}(x)\,dx
=
\int_0^\infty y^\theta e^{-y}\,dy
=
\Gamma(\theta+1).
\]
The estimate follows from the Cauchy--Schwarz inequality.
\end{remark}

\section{Generalized fractional Laguerre functions}
\label{sec:GFLF}

The fractional Laguerre functions introduced in the preceding section provide a
basis generated by the fractional variable \(x^\gamma\). For some approximation
problems, however, it is useful to modify the algebraic behavior of the basis
without changing the Laguerre variable. This motivates the following
generalized family. The additional parameter changes the algebraic scaling and
allows the approximation space and the weight to be tuned independently.

\subsection{Definition and structural properties}
\label{subsec:GFLF-def}

\begin{definition}[Generalized fractional Laguerre functions]
\label{def:GFLF}
Let \(\theta>-1\), \(\sigma\in\mathbb{R}\), \(\beta>-1\), and \(0<\gamma<1\). The generalized
fractional Laguerre functions are defined by
\begin{equation}\label{eq:GFLF-def}
\mathscr{L}_{m}^{(\theta,\sigma,\gamma,\beta)}(x)
:=
x^{\frac{\gamma(\theta-\sigma)}{2}}
\mathscr{L}_{m}^{(\theta,\gamma,\beta)}(x),
\qquad m=0,1,\ldots .
\end{equation}
Here
\[
\mathscr{L}_{m}^{(\theta,\gamma,\beta)}(x)
:=
L_m^{(\theta)}\!\left((\beta+1)x^\gamma\right).
\]
Equivalently,
\[
\mathscr{L}_{m}^{(\theta,\sigma,\gamma,\beta)}(x)
=
x^{\frac{\gamma(\theta-\sigma)}{2}}
L_m^{(\theta)}\!\left((\beta+1)x^\gamma\right).
\]
In particular,
\[
\mathscr{L}_{m}^{(\theta,\theta,\gamma,\beta)}(x)
=
\mathscr{L}_{m}^{(\theta,\gamma,\beta)}(x).
\]
\end{definition}

The generalized scaled weight is defined by
\[
\varrho^{\theta,\sigma,\gamma,\beta}(x)
:=
\gamma(\beta+1)^{\theta+1}
x^{\gamma(\sigma+1)-1}e^{-(\beta+1)x^\gamma},
\qquad x\in(0,\infty).
\]
Then the orthogonality relation reads
\begin{equation}\label{eq:GFLF-orth}
\int_0^\infty
\mathscr{L}_{m}^{(\theta,\sigma,\gamma,\beta)}(x)
\mathscr{L}_{\ell}^{(\theta,\sigma,\gamma,\beta)}(x)
\varrho^{\theta,\sigma,\gamma,\beta}(x)\,dx
=
h_m^{(\theta)}\delta_{m\ell},
\end{equation}
where
\begin{equation}\label{eq:GFLF-norm}
h_m^{(\theta)}
=
\frac{\Gamma(m+\theta+1)}{\Gamma(m+1)} .
\end{equation}

The ordinary derivative of the generalized basis follows from
\eqref{eq:GFLF-def} and the derivative identity for \(L_m^{(\theta)}\). One
obtains
\begin{align}
\partial_x\mathscr{L}_{m}^{(\theta,\sigma,\gamma,\beta)}(x)
&=
\frac{\gamma(\theta-\sigma)}{2}
x^{\frac{\gamma(\theta-\sigma)}{2}-1}
\mathscr{L}_{m}^{(\theta,\gamma,\beta)}(x)
+
x^{\frac{\gamma(\theta-\sigma)}{2}}
\partial_x\mathscr{L}_{m}^{(\theta,\gamma,\beta)}(x)
\nonumber\\
&=
\frac{\gamma(\theta-\sigma)}{2}
x^{-1}
\mathscr{L}_{m}^{(\theta,\sigma,\gamma,\beta)}(x)
-
\gamma(\beta+1)x^{\gamma-1}
\mathscr{L}_{m-1}^{(\theta+1,\sigma+1,\gamma,\beta)}(x),
\qquad m\ge1 .
\label{eq:GFLF-ordinary-derivative}
\end{align}

For the approximation analysis, it is convenient to introduce the scaled
mapped derivative
\begin{equation}\label{eq:GFLF-scaled-derivative}
\mathscr{D}_{\theta,\sigma,\gamma,\beta}u
:=
x^{\frac{\gamma(\theta-\sigma)}{2}}
\mathscr{D}_{\gamma,\beta}
\left(
x^{-\frac{\gamma(\theta-\sigma)}{2}}u
\right),
\qquad
\mathscr{D}_{\gamma,\beta}u
:=
\frac{x^{1-\gamma}}{\gamma(\beta+1)}\partial_x u .
\end{equation}
Since \(\mathscr{D}_{\gamma,\beta}=d/dy\) with
\(y=(\beta+1)x^\gamma\), we have
\begin{equation}\label{eq:GFLF-important-derivative}
-\mathscr{D}_{\theta,\sigma,\gamma,\beta}
\mathscr{L}_{m}^{(\theta,\sigma,\gamma,\beta)}(x)
=
\mathscr{L}_{m-1}^{(\theta+1,\sigma+1,\gamma,\beta)}(x)
=
\sum_{r=0}^{m-1}
\mathscr{L}_{r}^{(\theta,\sigma,\gamma,\beta)}(x),
\qquad m\ge1 .
\end{equation}
Thus the generalized basis preserves a closed degree-lowering differentiation
property in the scaled fractional variable.

Let
\(\{y_i^{(\theta)}\}_{i=0}^{M}\) and
\(\{\varpi_i^{(\theta)}\}_{i=0}^{M}\) be the Laguerre--Gauss nodes and weights
associated with the classical Laguerre weight \(y^\theta e^{-y}\) on
\((0,\infty)\). For the generalized weighted rule, set
\begin{equation}\label{eq:GFLF-nodes-weights}
x_i^{(\theta,\sigma,\gamma,\beta)}
:=
\left(\frac{y_i^{(\theta)}}{\beta+1}\right)^{1/\gamma},
\qquad
\lambda_i^{(\theta,\sigma,\gamma,\beta)}
:=
\left(x_i^{(\theta,\sigma,\gamma,\beta)}\right)^{\gamma(\sigma-\theta)}
\varpi_i^{(\theta)},
\qquad 0\le i\le M .
\end{equation}
Equivalently,
\[
\lambda_i^{(\theta,\sigma,\gamma,\beta)}
=
(\beta+1)^{\theta-\sigma}
\left(y_i^{(\theta)}\right)^{\sigma-\theta}
\varpi_i^{(\theta)} .
\]
For \(\eta\in\mathbb{R}\), define
\begin{equation}\label{eq:GFLF-space}
\mathbb{P}_{M}^{\eta,\gamma}
:=
\left\{
x^\eta q\!\left(x^\gamma\right):
q\in\mathbb{P}_{M}
\right\}.
\end{equation}
The generalized scaled quadrature formula is
\begin{equation}\label{eq:GFLF-quadrature}
\int_0^\infty
f(x)\varrho^{\theta,\sigma,\gamma,\beta}(x)\,dx
=
\sum_{i=0}^{M}
f\!\left(x_i^{(\theta,\sigma,\gamma,\beta)}\right)
\lambda_i^{(\theta,\sigma,\gamma,\beta)},
\qquad
\forall f\in
\mathbb{P}_{2M+1}^{\gamma(\theta-\sigma),\gamma}.
\end{equation}
Indeed, if
\[
f(x)=x^{\gamma(\theta-\sigma)}q\!\left(x^\gamma\right),
\qquad q\in\mathbb{P}_{2M+1},
\]
then, under the change of variable \(y=(\beta+1)x^\gamma\),
\[
f(x)\varrho^{\theta,\sigma,\gamma,\beta}(x)\,dx
=
q\!\left(\frac{y}{\beta+1}\right)y^\theta e^{-y}\,dy.
\]
Since \(q(y/(\beta+1))\) is still a polynomial of degree at most \(2M+1\) in
\(y\), the exactness follows from the classical Laguerre--Gauss quadrature
rule.

Finally, the explicit representation follows from the closed form of the
Laguerre polynomial:
\begin{equation}\label{eq:GFLF-explicit}
\mathscr{L}_{m}^{(\theta,\sigma,\gamma,\beta)}(x)
=
\sum_{r=0}^{m}
\frac{(-1)^r(\beta+1)^r}{r!}
\binom{m+\theta}{m-r}
x^{\frac{\gamma(\theta-\sigma)}{2}+r\gamma}. ,
\qquad x>0 .
\end{equation}

This representation shows that the generalized system belongs to the algebraic
fractional space generated by powers of \(x^\gamma\). The scale parameter
\(\beta\) modifies the coefficients and the associated weighted structure, but
not the underlying fractional polynomial span.

\subsection{Projection estimate}
\label{subsec:GFLF-projection}

Let \(\theta>-1\), \(\sigma\in\mathbb{R}\), \(\beta>-1\), and \(0<\gamma<1\).
Set
\[
\eta_{\theta,\sigma,\gamma}:=\frac{\gamma(\theta-\sigma)}{2}.
\]
Recall that
\[
\varrho^{\theta,\sigma,\gamma,\beta}(x)
=
\gamma(\beta+1)^{\theta+1}
x^{\gamma(\sigma+1)-1}e^{-(\beta+1)x^\gamma},
\qquad x\in\Lambda:=(0,\infty).
\]
We define the weighted orthogonal projection
\[
\Pi_M^{\theta,\sigma,\gamma,\beta}:
L^2_{\varrho^{\theta,\sigma,\gamma,\beta}}(\Lambda)
\longrightarrow
\mathbb{P}_{M}^{\eta_{\theta,\sigma,\gamma},\gamma}
\]
by
\begin{equation}\label{eq:GFLF-projection-def}
\bigl(
u-\Pi_M^{\theta,\sigma,\gamma,\beta}u,w
\bigr)_{\varrho^{\theta,\sigma,\gamma,\beta}}
=
0,
\qquad
\forall w\in
\mathbb{P}_{M}^{\eta_{\theta,\sigma,\gamma},\gamma}.
\end{equation}
Here
\[
\mathbb{P}_{M}^{\eta_{\theta,\sigma,\gamma},\gamma}
:=
\left\{
x^{\eta_{\theta,\sigma,\gamma}}q(x):
q\in\mathbb{P}^\gamma_{M}
\right\},
\qquad
\mathbb{P}_{M}^{\gamma}
=
\operatorname{span}
\{1,x^\gamma,\ldots,x^{M\gamma}\}.
\]

By the orthogonality relation \eqref{eq:GFLF-orth}, the projection admits the
expansion
\begin{equation}\label{eq:GFLF-projection-expansion}
\Pi_M^{\theta,\sigma,\gamma,\beta}u
=
\sum_{m=0}^{M}
\widehat u_m^{\theta,\sigma,\gamma,\beta}
\mathscr{L}_{m}^{(\theta,\sigma,\gamma,\beta)} .
\end{equation}
The corresponding coefficients are given by
\begin{equation}\label{eq:GFLF-coeff}
\widehat u_m^{\theta,\sigma,\gamma,\beta}
=
\bigl(h_m^{(\theta)}\bigr)^{-1}
\int_0^\infty
u(x)
\mathscr{L}_{m}^{(\theta,\sigma,\gamma,\beta)}(x)
\varrho^{\theta,\sigma,\gamma,\beta}(x)\,dx .
\end{equation}

To state the approximation estimate, we introduce a scale of weighted
Sobolev-type spaces generated by the scaled mapped derivative
\[
\mathscr{D}_{\theta,\sigma,\gamma,\beta}u
:=
x^{\frac{\gamma(\theta-\sigma)}{2}}
\mathscr{D}_{\gamma,\beta}
\left(
x^{-\frac{\gamma(\theta-\sigma)}{2}}u
\right),
\qquad
\mathscr{D}_{\gamma,\beta}u
:=
\frac{x^{1-\gamma}}{\gamma(\beta+1)}\partial_xu .
\]
Equivalently, \(\mathscr{D}_{\gamma,\beta}=d/dy\) with
\[
y=(\beta+1)x^\gamma .
\]
For \(\mu\in\mathbb{N}\), we define
\begin{equation}\label{eq:GFLF-Sobolev-space}
\mathcal{A}_{\theta,\sigma,\gamma,\beta}^{\mu}(\Lambda)
:=
\left\{
v\in L^2_{\varrho^{\theta,\sigma,\gamma,\beta}}(\Lambda):
\mathscr{D}_{\theta,\sigma,\gamma,\beta}^{\,r}v
\in L^2_{\varrho^{\theta+r,\sigma+r,\gamma,\beta}}(\Lambda),
\quad 1\le r\le \mu
\right\}.
\end{equation}
The associated seminorms and norm are given as
\[
|v|_{\mathcal{A}_{\theta,\sigma,\gamma,\beta}^{r}}
:=
\left\|
\mathscr{D}_{\theta,\sigma,\gamma,\beta}^{\,r}v
\right\|_{\varrho^{\theta+r,\sigma+r,\gamma,\beta}},
\qquad
0\le r\le \mu,
\]
and
\[
\|v\|_{\mathcal{A}_{\theta,\sigma,\gamma,\beta}^{\mu}}
:=
\left(
\sum_{r=0}^{\mu}
|v|_{\mathcal{A}_{\theta,\sigma,\gamma,\beta}^{r}}^2
\right)^{1/2}.
\]

\begin{theorem}\label{thm:GFLF-projection-estimate}
Let \(\mu,M\in\mathbb{N}\), \(0\le s\le\widehat\mu\), where
$\widehat\mu:=\min\{\mu,M+1\}.$
Let \(\theta>-1\), \(\sigma\in\mathbb{R}\), \(\beta>-1\), and \(0<\gamma<1\).
If \(u\in\mathcal{A}_{\theta,\sigma,\gamma,\beta}^{\mu}(\Lambda)\), then
\begin{equation}\label{eq:GFLF-projection-estimate}
\left\|
\mathscr{D}_{\theta,\sigma,\gamma,\beta}^{\,s}
\left(
u-\Pi_M^{\theta,\sigma,\gamma,\beta}u
\right)
\right\|_{\varrho^{\theta+s,\sigma+s,\gamma,\beta}}
\le
\left[
\frac{(M-\widehat\mu+1)!}{(M-s+1)!}
\right]^{1/2}
\left\|
\mathscr{D}_{\theta,\sigma,\gamma,\beta}^{\,\widehat\mu}u
\right\|_{\varrho^{\theta+\widehat\mu,\sigma+\widehat\mu,\gamma,\beta}} .
\end{equation}
\end{theorem}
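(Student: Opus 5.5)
The plan is to reduce Theorem~\ref{thm:GFLF-projection-estimate} to Theorem~\ref{thm:FLF-projection-estimate} through the multiplicative conjugation built into \eqref{eq:GFLF-scaled-derivative}. Set $\eta=\eta_{\theta,\sigma,\gamma}$ and, for $u\in L^2_{\varrho^{\theta,\sigma,\gamma,\beta}}(\Lambda)$, put $v:=x^{-\eta}u$.

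\textbf{Step 1: transfer of norms.} Since
\[
x^{2\eta}\varrho^{\theta+r,\sigma+r,\gamma,\beta}
=\gamma(\beta+1)^{\theta+r+1}x^{\gamma(\theta+r+1)-1}e^{-(\beta+1)x^\gamma}
=\varrho^{\theta+r,\beta,\gamma},
\]
and the shift $(\theta,\sigma)\mapsto(\theta+r,\sigma+r)$ leaves $\eta$ unchanged, we obtain
\[
\|x^{\eta}w\|_{\varrho^{\theta+r,\sigma+r,\gamma,\beta}}=\|w\|_{\varrho^{\theta+r,\beta,\gamma}},\qquad r\ge0 .
\]
By the definition \eqref{eq:GFLF-scaled-derivative} and induction,
\[
\mathscr{D}^{\,r}_{\theta,\sigma,\gamma,\beta}u=x^{\eta}\mathscr{D}^{\,r}_{\beta,\gamma}v .
\]
The intermediate factors $x^{-\eta}x^{\eta}$ cancel at each step, which is why the exponent stays fixed. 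Here $\mathscr{D}_{\gamma,\beta}$ is the same operator as $\mathscr{D}_{\beta,\gamma}$. It follows that $u\in\mathcal{A}^{\mu}_{\theta,\sigma,\gamma,\beta}(\Lambda)$ if and only if $v\in\mathcal{A}^{\mu}_{\theta,\beta,\gamma}(\Lambda)$, and the seminorms coincide.

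\textbf{Step 2: projections commute with the conjugation.} I will check that
\[
\Pi^{\theta,\sigma,\gamma,\beta}_M u=x^{\eta}\,\Pi^{\theta,\beta,\gamma}_M v .
\]
The right-hand side lies in $\mathbb{P}^{\eta,\gamma}_M$. For a test function $w=x^{\eta}q$ with $q\in\mathbb{P}^{\gamma}_M$, the weight identity of Step 1 (with $r=0$) gives
\[
(u-x^\eta\Pi_M v,\;x^\eta q)_{\varrho^{\theta,\sigma,\gamma,\beta}}
=(v-\Pi_M v,\;q)_{\varrho^{\theta,\beta,\gamma}}=0 .
\]
This is exactly the Galerkin condition \eqref{eq:GFLF-projection-def}. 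Equivalently, the coefficients \eqref{eq:GFLF-coeff} of $u$ coincide with the coefficients \eqref{eq:FLF-projection-expansion} of $v$, because $\mathscr{L}^{(\theta,\sigma,\gamma,\beta)}_m=x^\eta\mathscr{L}^{(\theta,\beta,\gamma)}_m$.

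\textbf{Step 3: conclusion.} Combining the two steps,
\[
\bigl\|\mathscr{D}^{\,s}_{\theta,\sigma,\gamma,\beta}(u-\Pi^{\theta,\sigma,\gamma,\beta}_M u)\bigr\|_{\varrho^{\theta+s,\sigma+s,\gamma,\beta}}
=\bigl\|\mathscr{D}^{\,s}_{\beta,\gamma}(v-\Pi^{\theta,\beta,\gamma}_M v)\bigr\|_{\varrho^{\theta+s,\beta,\gamma}} .
\]
Theorem~\ref{thm:FLF-projection-estimate} bounds the right-hand side by
\[
\Bigl[\tfrac{(M-\widehat\mu+1)!}{(M-s+1)!}\Bigr]^{1/2}
\|\mathscr{D}^{\,\widehat\mu}_{\beta,\gamma}v\|_{\varrho^{\theta+\widehat\mu,\beta,\gamma}} .
\]
Applying Step 1 once more converts this factor back into $\|\mathscr{D}^{\,\widehat\mu}_{\theta,\sigma,\gamma,\beta}u\|_{\varrho^{\theta+\widehat\mu,\sigma+\widehat\mu,\gamma,\beta}}$, which is \eqref{eq:GFLF-projection-estimate}.

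\textbf{Alternative and main obstacle.} One could instead repeat the modal argument directly, using \eqref{eq:GFLF-important-derivative} and \eqref{eq:GFLF-orth} with the same gamma-ratio monotonicity $h^{(\theta+s)}_{m-s}/h^{(\theta+\widehat\mu)}_{m-\widehat\mu}=(m-\widehat\mu)!/(m-s)!$, which is nonincreasing in $m$ when $s\le\widehat\mu$. The only delicate point is the bookkeeping in Steps 1--2: one must verify that $\eta$ is invariant under the simultaneous shift of $\theta$ and $\sigma$. This invariance is exactly what makes the weight identity hold at every derivative level $r$, and I expect it to be the main (though minor) obstacle.
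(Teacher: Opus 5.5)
Your argument is correct, but it is not the route the paper takes for this theorem. The paper repeats the modal computation directly in the generalized setting, which is essentially the ``alternative'' you sketch at the end. It expands $u$ in the basis $\mathscr{L}_m^{(\theta,\sigma,\gamma,\beta)}$ and iterates \eqref{eq:GFLF-important-derivative} to obtain $\mathscr{D}^{\,r}_{\theta,\sigma,\gamma,\beta}\mathscr{L}_m^{(\theta,\sigma,\gamma,\beta)}=(-1)^r\mathscr{L}_{m-r}^{(\theta+r,\sigma+r,\gamma,\beta)}$. It then uses \eqref{eq:GFLF-orth} with $(\theta,\sigma)$ shifted to $(\theta+r,\sigma+r)$ to write the seminorms as $\sum_{m\ge r}h^{(\theta+r)}_{m-r}|\widehat u_m^{\theta,\sigma,\gamma,\beta}|^2$. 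Finally, it bounds the tail $m\ge M+1$ by the factorial ratio evaluated at $m=M+1$. Your reduction instead shows that $u\mapsto x^{-\eta}u$ is an isometry from $L^2_{\varrho^{\theta+r,\sigma+r,\gamma,\beta}}$ onto $L^2_{\varrho^{\theta+r,\beta,\gamma}}$ at every level $r$. This map also intertwines the mapped derivatives and the two projections, so the estimate is inherited verbatim from Theorem~\ref{thm:FLF-projection-estimate}. What your route buys is economy and structure. It explains why \eqref{eq:GFLF-orth} and \eqref{eq:GFLF-important-derivative} hold in the first place. It makes explicit that $\sigma$ only contributes the fixed algebraic factor $x^{\eta}$, and it transfers every FLF estimate at once, not just this one. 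The paper itself uses exactly this conjugation, with $z=x^{-\eta}v$, for the interpolation result, Theorem~\ref{thm:GFLF-interpolation-estimate}, so your proof makes the two generalized theorems uniform. The paper's direct version has the merit of being self-contained and not depending on the earlier theorem. However, as you observe, the generalized coefficients of $u$ coincide with the FLF coefficients of $x^{-\eta}u$, so it is really the same computation written out twice.
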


\begin{proof}
Let
\[
u(x)
=
\sum_{m=0}^{\infty}
\widehat u_m^{\theta,\sigma,\gamma,\beta}
\mathscr{L}_{m}^{(\theta,\sigma,\gamma,\beta)}(x).
\]
From the differentiation identity \eqref{eq:GFLF-important-derivative},
repeated application gives
\begin{equation}\label{eq:GFLF-deriv-norm-expansion}
\mathscr{D}_{\theta,\sigma,\gamma,\beta}^{\,r}
\mathscr{L}_{m}^{(\theta,\sigma,\gamma,\beta)}(x)
=
(-1)^r
\mathscr{L}_{m-r}^{(\theta+r,\sigma+r,\gamma,\beta)}(x),
\qquad 0\le r\le m .
\end{equation}
By
\eqref{eq:GFLF-orth},
\[
\left\|
\mathscr{D}_{\theta,\sigma,\gamma,\beta}^{\,r}u
\right\|_{\varrho^{\theta+r,\sigma+r,\gamma,\beta}}^2
=
\sum_{m=r}^{\infty}
h_{m-r}^{(\theta+r)}
\left|
\widehat u_m^{\theta,\sigma,\gamma,\beta}
\right|^2,
\qquad r\ge1 .
\]
Consequently,
\[
\begin{aligned}
&
\left\|
\mathscr{D}_{\theta,\sigma,\gamma,\beta}^{\,s}
\left(
u-\Pi_M^{\theta,\sigma,\gamma,\beta}u
\right)
\right\|_{\varrho^{\theta+s,\sigma+s,\gamma,\beta}}^2
\\
&\qquad =
\sum_{m=M+1}^{\infty}
h_{m-s}^{(\theta+s)}
\left|
\widehat u_m^{\theta,\sigma,\gamma,\beta}
\right|^2
\\
&\qquad \le
\max_{m\ge M+1}
\frac{
h_{m-s}^{(\theta+s)}
}{
h_{m-\widehat\mu}^{(\theta+\widehat\mu)}
}
\sum_{m=M+1}^{\infty}
h_{m-\widehat\mu}^{(\theta+\widehat\mu)}
\left|
\widehat u_m^{\theta,\sigma,\gamma,\beta}
\right|^2
\\
&\qquad \le
\frac{
h_{M+1-s}^{(\theta+s)}
}{
h_{M+1-\widehat\mu}^{(\theta+\widehat\mu)}
}
\left\|
\mathscr{D}_{\theta,\sigma,\gamma,\beta}^{\,\widehat\mu}u
\right\|_{\varrho^{\theta+\widehat\mu,\sigma+\widehat\mu,\gamma,\beta}}^2 .
\end{aligned}
\]
Using
\[
h_m^{(\theta)}
=
\frac{\Gamma(m+\theta+1)}{\Gamma(m+1)},
\]
we obtain
\[
\frac{
h_{M+1-s}^{(\theta+s)}
}{
h_{M+1-\widehat\mu}^{(\theta+\widehat\mu)}
}
=
\frac{(M-\widehat\mu+1)!}{(M-s+1)!}.
\]
Substitution into the preceding estimate gives
\[
\left\|
\mathscr{D}_{\theta,\sigma,\gamma,\beta}^{\,s}
\left(
u-\Pi_M^{\theta,\sigma,\gamma,\beta}u
\right)
\right\|_{\varrho^{\theta+s,\sigma+s,\gamma,\beta}}^2
\le
\frac{(M-\widehat\mu+1)!}{(M-s+1)!}
\left\|
\mathscr{D}_{\theta,\sigma,\gamma,\beta}^{\,\widehat\mu}u
\right\|_{\varrho^{\theta+\widehat\mu,\sigma+\widehat\mu,\gamma,\beta}}^2 .
\]
Taking square roots proves \eqref{eq:GFLF-projection-estimate}.
\end{proof}

\subsection{Interpolation estimate}
\label{subsec:GFLF-interpolation}

Let \(\{y_i^{(\theta)}\}_{i=0}^{M}\) be the Laguerre--Gauss nodes associated
with the classical Laguerre weight \(y^\theta e^{-y}\) on \((0,\infty)\). The
corresponding scaled fractional nodes are
\[
x_i^{(\theta,\gamma,\beta)}
:=
\left(\frac{y_i^{(\theta)}}{\beta+1}\right)^{1/\gamma},
\qquad 0\le i\le M .
\]
Set
\[
\eta_{\theta,\sigma,\gamma}:=\frac{\gamma(\theta-\sigma)}{2}.
\]
We define the generalized interpolation operator
\[
\mathcal{I}_{M}^{\theta,\sigma,\gamma,\beta}:
C(\Lambda)\longrightarrow
\mathbb{P}_{M}^{\eta_{\theta,\sigma,\gamma},\gamma}
\]
by the nodal interpolation conditions
\[
\mathcal{I}_{M}^{\theta,\sigma,\gamma,\beta}v
\!\left(x_i^{(\theta,\gamma,\beta)}\right)
=
v\!\left(x_i^{(\theta,\gamma,\beta)}\right),
\qquad 0\le i\le M .
\]
Here
\[
\mathbb{P}_{M}^{\eta_{\theta,\sigma,\gamma},\gamma}
:=
\left\{
x^{\eta_{\theta,\sigma,\gamma}}q(x):
q\in\mathbb{P}^\gamma_{M}
\right\}.
\]

Equivalently,
\begin{equation}\label{eq:GFLF-interpolation-formula}
\mathcal{I}_{M}^{\theta,\sigma,\gamma,\beta}v(x)
=
\sum_{i=0}^{M}
v\!\left(x_i^{(\theta,\gamma,\beta)}\right)
\ell_i^{(\theta,\sigma,\gamma,\beta)}
\!\left(Y_{\gamma,\beta}(x)\right),
\qquad
Y_{\gamma,\beta}(x):=(\beta+1)x^\gamma ,
\end{equation}
where the generalized scaled mapped Lagrange functions are given by
\[
\ell_i^{(\theta,\sigma,\gamma,\beta)}
\!\left(Y_{\gamma,\beta}(x)\right)
=
\frac{
x^{\eta_{\theta,\sigma,\gamma}}
\displaystyle
\prod_{\substack{r=0\\ r\ne i}}^{M}
\left(
x^\gamma-
\left(x_r^{(\theta,\gamma,\beta)}\right)^\gamma
\right)
}{
\left(x_i^{(\theta,\gamma,\beta)}\right)^{\eta_{\theta,\sigma,\gamma}}
\displaystyle
\prod_{\substack{r=0\\ r\ne i}}^{M}
\left(
\left(x_i^{(\theta,\gamma,\beta)}\right)^\gamma
-
\left(x_r^{(\theta,\gamma,\beta)}\right)^\gamma
\right)
}.
\]
Hence,
\[
\ell_i^{(\theta,\sigma,\gamma,\beta)}
\!\left(Y_{\gamma,\beta}(x_j^{(\theta,\gamma,\beta)})\right)
=
\delta_{ij},
\qquad 0\le i,j\le M .
\]

The operator \(\mathcal{I}_{M}^{\theta,\sigma,\gamma,\beta}\) can be written in
terms of the scaled fractional interpolation operator
\(\mathcal{I}_{M}^{\theta,\gamma,\beta}\) as
\begin{equation}\label{eq:GFLF-interp-relation}
\mathcal{I}_{M}^{\theta,\sigma,\gamma,\beta}v(x)
=
x^{\eta_{\theta,\sigma,\gamma}}
\mathcal{I}_{M}^{\theta,\gamma,\beta}
\left\{
x^{-\eta_{\theta,\sigma,\gamma}}v(x)
\right\}.
\end{equation}
Consequently,
\[
\mathcal{I}_{M}^{\theta,\sigma,\gamma,\beta}v
\in
\mathbb{P}_{M}^{\eta_{\theta,\sigma,\gamma},\gamma}.
\]

\begin{theorem}\label{thm:GFLF-interpolation-estimate}
Let \(\mu,M\in\mathbb{N}\), \(\theta>-1\), \(\sigma\in\mathbb{R}\),
\(\beta>-1\), \(0<\gamma<1\), and
$\widehat\mu:=\min\{\mu,M+1\}.$
Assume that
$v\in C(\Lambda)\cap \mathcal{A}_{\theta,\sigma,\gamma,\beta}^{\mu}(\Lambda),
\
\mathscr{D}_{\theta,\sigma,\gamma,\beta}v
\in
\mathcal{A}_{\theta,\sigma,\gamma,\beta}^{\mu-1}(\Lambda).$
Then
\begin{equation}\label{eq:GFLF-interpolation-estimate}
\left\|
\mathcal{I}_{M}^{\theta,\sigma,\gamma,\beta}v-v
\right\|_{\varrho^{\theta,\sigma,\gamma,\beta}}
\le
C
\left[
\frac{(M+1-\widehat\mu)!}
{M!}
\right]^{1/2}
\left[
\left\|
\mathscr{D}_{\theta,\sigma,\gamma,\beta}^{\,\widehat\mu}v
\right\|_{\varrho^{\theta+\mu-1,\sigma+\mu-1,\gamma,\beta}}
+
2\sqrt{\log M}
\left\|
\mathscr{D}_{\theta,\sigma,\gamma,\beta}^{\,\widehat\mu}v
\right\|_{\varrho^{\theta+\mu,\sigma+\mu,\gamma,\beta}}
\right].
\end{equation}
\end{theorem}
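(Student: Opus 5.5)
The plan is to reduce Theorem~\ref{thm:GFLF-interpolation-estimate} to Theorem~\ref{thm:FLF-interpolation-error} by conjugating with the algebraic factor $x^{\eta}$, where $\eta:=\eta_{\theta,\sigma,\gamma}$. Set $w:=x^{-\eta}v$. First I will record three transfer identities.

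\emph{Weights.} A direct comparison of the two weights gives
\[
x^{2\eta}\varrho^{\theta+r,\sigma+r,\gamma,\beta}(x)=\varrho^{\theta+r,\beta,\gamma}(x).
\]
Indeed, the exponent of $x$ becomes $\gamma(\theta-\sigma)+\gamma(\sigma+r+1)-1=\gamma(\theta+r+1)-1$. The prefactors also agree, because $(\beta+1)^{\theta+r+1}$ appears in both. Hence
\[
\|x^{\eta}g\|_{\varrho^{\theta+r,\sigma+r,\gamma,\beta}}=\|g\|_{\varrho^{\theta+r,\beta,\gamma}}.
\]

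\emph{Derivatives.} By definition~\eqref{eq:GFLF-scaled-derivative},
\[
\mathscr{D}^{\,r}_{\theta,\sigma,\gamma,\beta}v=x^{\eta}\mathscr{D}^{\,r}_{\beta,\gamma}w,
\]
because the conjugation telescopes under iteration. Also $\mathscr{D}_{\gamma,\beta}$ and $\mathscr{D}_{\beta,\gamma}$ denote the same operator $d/dy$.

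\emph{Interpolation.} By~\eqref{eq:GFLF-interp-relation}, $\mathcal{I}^{\theta,\sigma,\gamma,\beta}_Mv=x^{\eta}\mathcal{I}^{\theta,\beta,\gamma}_Mw$, with the same nodes $x_i$.

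Combining these, $v\in\mathcal{A}^{\mu}_{\theta,\sigma,\gamma,\beta}$ if and only if $w\in\mathcal{A}^{\mu}_{\theta,\beta,\gamma}$, with equal seminorms. Likewise, $\mathscr{D}_{\theta,\sigma,\gamma,\beta}v\in\mathcal{A}^{\mu-1}_{\theta,\sigma,\gamma,\beta}$ translates into $\mathscr{D}_{\beta,\gamma}w\in\mathcal{A}^{\mu-1}_{\theta,\beta,\gamma}$. Continuity also transfers, since $x^{-\eta}$ is continuous on $\Lambda$.

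Next I will write
\[
\|\mathcal{I}^{\theta,\sigma,\gamma,\beta}_Mv-v\|_{\varrho^{\theta,\sigma,\gamma,\beta}}=\|x^{\eta}(\mathcal{I}^{\theta,\beta,\gamma}_Mw-w)\|_{\varrho^{\theta,\sigma,\gamma,\beta}}=\|\mathcal{I}^{\theta,\beta,\gamma}_Mw-w\|_{\varrho^{\theta,\beta,\gamma}}.
\]
I will then apply Theorem~\ref{thm:FLF-interpolation-error} to $w$, and map each norm on its right-hand side back. For example,
\[
\|\mathscr{D}^{\,\widehat\mu}_{\beta,\gamma}w\|_{\varrho^{\theta+\mu-1,\beta,\gamma}}=\|x^{\eta}\mathscr{D}^{\,\widehat\mu}_{\beta,\gamma}w\|_{\varrho^{\theta+\mu-1,\sigma+\mu-1,\gamma,\beta}}=\|\mathscr{D}^{\,\widehat\mu}_{\theta,\sigma,\gamma,\beta}v\|_{\varrho^{\theta+\mu-1,\sigma+\mu-1,\gamma,\beta}},
\]
and similarly with index $\mu$. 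This yields~\eqref{eq:GFLF-interpolation-estimate} with the same constant $C$.

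The only step needing care is verifying that the weight identity holds for every shifted index $r$, so that both derivative norms transfer. Since the $\sigma$-shift and the $\theta$-shift move together, $\theta-\sigma$ and hence $\eta$ stay fixed, and the identity holds uniformly in $r$. I do not expect any obstacle beyond this bookkeeping.

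If a self-contained argument is preferred instead, I would repeat the proof of Theorem~\ref{thm:FLF-interpolation-error} directly in the generalized setting. That route splits the error through $\Pi^{\theta,\sigma,\gamma,\beta}_M$, uses Theorem~\ref{thm:GFLF-projection-estimate} together with a conjugated stability bound, and bounds the commutator by the same computation, using~\eqref{eq:GFLF-important-derivative} and the bound $\mathfrak{s}_M\le CM$. Conjugation is simpler, so I will use it.
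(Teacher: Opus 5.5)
Your proposal is correct and follows the paper's proof essentially verbatim: set $w=x^{-\eta}v$ and use the weight identity $x^{2\eta}\varrho^{\theta+r,\sigma+r,\gamma,\beta}=\varrho^{\theta+r,\beta,\gamma}$, the telescoping conjugation of $\mathscr{D}_{\theta,\sigma,\gamma,\beta}$, and \eqref{eq:GFLF-interp-relation}, then invoke Theorem~\ref{thm:FLF-interpolation-error} for $w$ and map the norms back. You also check explicitly that the hypotheses carry over to $w$ (continuity and the $\mathcal{A}$-space memberships), which the paper leaves implicit.
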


\begin{proof}
Define
\[
z(x):=x^{-\eta_{\theta,\sigma,\gamma}}v(x).
\]
From \eqref{eq:GFLF-interp-relation}, we obtain
\[
\mathcal{I}_{M}^{\theta,\sigma,\gamma,\beta}v(x)-v(x)
=
x^{\eta_{\theta,\sigma,\gamma}}
\left(
\mathcal{I}_{M}^{\theta,\gamma,\beta}z(x)-z(x)
\right).
\]
Therefore,
\[
\left\|
\mathcal{I}_{M}^{\theta,\sigma,\gamma,\beta}v-v
\right\|_{\varrho^{\theta,\sigma,\gamma,\beta}}
=
\left\|
\mathcal{I}_{M}^{\theta,\gamma,\beta}z-z
\right\|_{\varrho^{\theta,\theta,\gamma,\beta}} .
\]
Indeed,
\[
x^{2\eta_{\theta,\sigma,\gamma}}\varrho^{\theta,\sigma,\gamma,\beta}(x)
=
x^{\gamma(\theta-\sigma)}
\gamma(\beta+1)^{\theta+1}
x^{\gamma(\sigma+1)-1}e^{-(\beta+1)x^\gamma}
=
\gamma(\beta+1)^{\theta+1}
x^{\gamma(\theta+1)-1}e^{-(\beta+1)x^\gamma}
=
\varrho^{\theta,\theta,\gamma,\beta}(x).
\]

Moreover, by the definition of
\(\mathscr{D}_{\theta,\sigma,\gamma,\beta}\),
\[
\mathscr{D}_{\gamma,\beta}z
=
x^{-\eta_{\theta,\sigma,\gamma}}
\mathscr{D}_{\theta,\sigma,\gamma,\beta}v .
\]
Repeated application gives
\[
\mathscr{D}_{\gamma,\beta}^{\,\widehat\mu}z
=
x^{-\eta_{\theta,\sigma,\gamma}}
\mathscr{D}_{\theta,\sigma,\gamma,\beta}^{\,\widehat\mu}v .
\]
Consequently,
\[
\left\|
\mathscr{D}_{\gamma,\beta}^{\,\widehat\mu}z
\right\|_{\varrho^{\theta+\mu-1,\theta+\mu-1,\gamma,\beta}}
=
\left\|
\mathscr{D}_{\theta,\sigma,\gamma,\beta}^{\,\widehat\mu}v
\right\|_{\varrho^{\theta+\mu-1,\sigma+\mu-1,\gamma,\beta}},
\]
and
\[
\left\|
\mathscr{D}_{\gamma,\beta}^{\,\widehat\mu}z
\right\|_{\varrho^{\theta+\mu,\theta+\mu,\gamma,\beta}}
=
\left\|
\mathscr{D}_{\theta,\sigma,\gamma,\beta}^{\,\widehat\mu}v
\right\|_{\varrho^{\theta+\mu,\sigma+\mu,\gamma,\beta}} .
\]
Applying Theorem~\ref{thm:FLF-interpolation-error} to \(z\), with the scaled
fractional variable \(Y_{\gamma,\beta}(x)=(\beta+1)x^\gamma\), gives
\eqref{eq:GFLF-interpolation-estimate}.
\end{proof}

\section{Numerical results}

\subsection{Approximation by fractional Laguerre functions}

In this subsection, we present numerical experiments illustrating the
approximation behavior of the fractional Laguerre functions on the
semi-infinite interval. We first examine the distribution of the transformed
Laguerre points. Figure~\ref{fig:FLF-nodes-all} contains three panels. The
left panel shows the effect of the scaling parameter \(\beta\), with fixed
\(\theta=0\), \(\gamma=1/3\), and \(M=80\). The middle panel displays the
influence of the Laguerre parameter \(\theta\), with fixed \(M=80\),
\(\gamma=1/3\), and \(\beta=20\). The right panel illustrates the effect of
the fractional parameter \(\gamma\), with fixed \(M=80\), \(\theta=0\), and
\(\beta=20\). These plots demonstrate how the parameters \(\beta\), \(\theta\),
and \(\gamma\) affect the location of the collocation points and their
concentration in the physical variable.

\begin{figure}[htbp]
\centering
\includegraphics[width=1\textwidth]{\detokenize{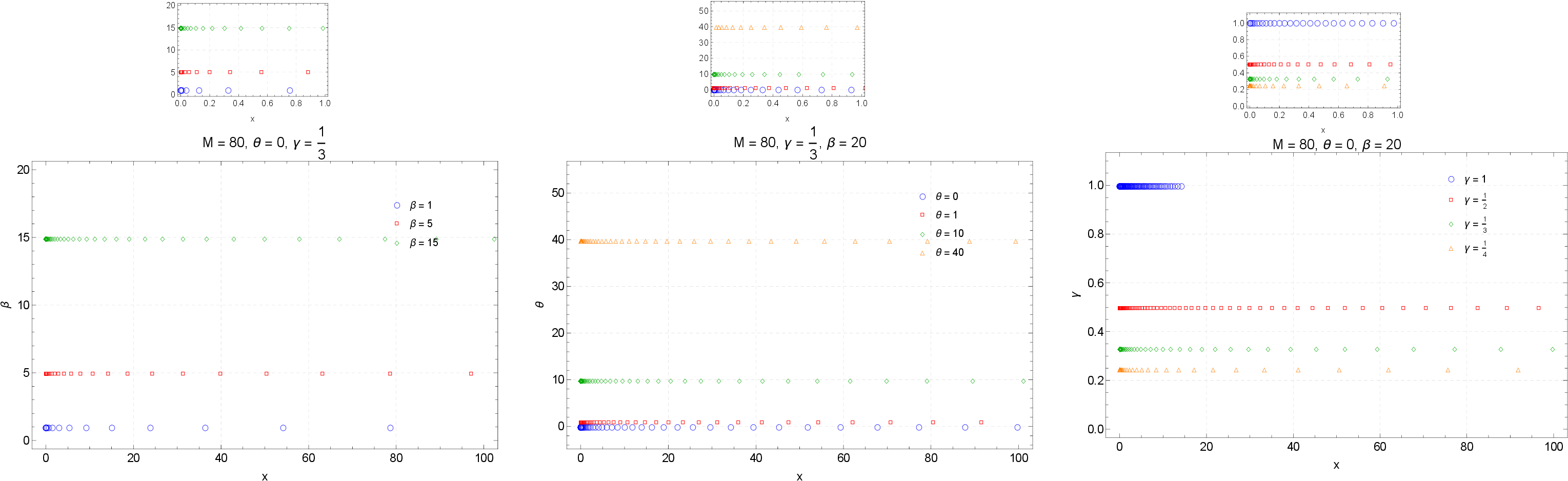}}
\caption{
Fractional Laguerre point distributions. Left: effect of \(\beta\) for
\(\theta=0\), \(\gamma=1/3\), and \(M=80\). Middle: effect of
\(\theta\) for \(M=80\), \(\gamma=1/3\), and \(\beta=20\). Right: effect
of \(\gamma\) for \(M=80\), \(\theta=0\), and \(\beta=20\).
}
\label{fig:FLF-nodes-all}
\end{figure}

We next investigate the projection accuracy for functions whose dominant
regularity is expressed in fractional powers of \(x\). The test functions are
\[
        u_1(x)=\sin(x^\alpha),\qquad
        u_2(x)=\exp(-x^\alpha),\qquad
        u_3(x)=x^\alpha E_{\alpha,\alpha+1}(-x^\alpha),
\]
where \(E_{\alpha,\alpha+1}\) denotes the two-parameter Mittag--Leffler
function. In each experiment, \(\alpha\) denotes the fractional scale appearing
in the target function, whereas \(\gamma\) denotes the fractional parameter used
in the approximation basis.

In the first projection experiment, we take \(\alpha=1/3\). Figure
\(\ref{fig:FLF-proj-gamma-third}\) reports the projection errors for
\(\theta=0\), \(\beta=20\), and approximation parameters
\(\gamma=1,1/6,1/3\). The results show that the fractional Laguerre basis is
more effective when the fractional scale in the approximation space is
compatible with the algebraic structure of the target functions.

\begin{figure}[htbp]
\centering
\includegraphics[width=1\textwidth]{\detokenize{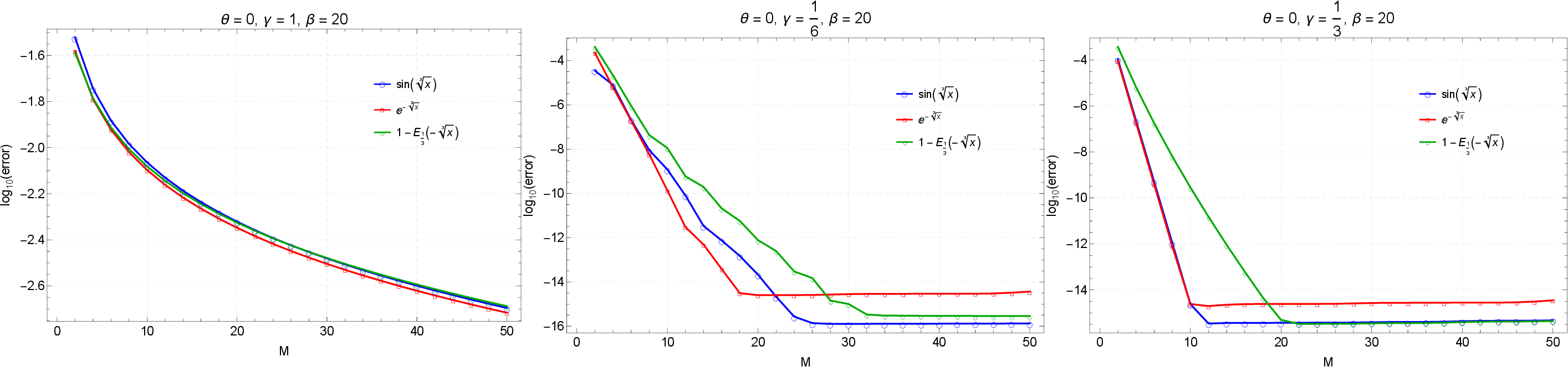}}
\caption{
Projection errors for
\(u_1(x)=\sin(x^{1/3})\), \(u_2(x)=\exp(-x^{1/3})\), and
\(u_3(x)=x^{1/3}E_{1/3,4/3}(-x^{1/3})\), with
\(\theta=0\), \(\beta=20\), and approximation parameters
\(\gamma=1,1/6,1/3\).
}
\label{fig:FLF-proj-gamma-third}
\end{figure}

The second projection experiment studies the influence of the scaling
parameter \(\beta\). We consider target functions with fractional scale
\(\alpha=2/3\). Figure~\ref{fig:FLF-proj-beta} shows the errors for fixed
\(\theta=0\) and \(\gamma=2/3\), using \(\beta=1,5,20\). The parameter
\(\beta\) changes the effective distribution of the Laguerre points in the
physical variable and therefore has a visible effect on the projection
accuracy.

\begin{figure}[htbp]
\centering
\includegraphics[width=1\textwidth]{\detokenize{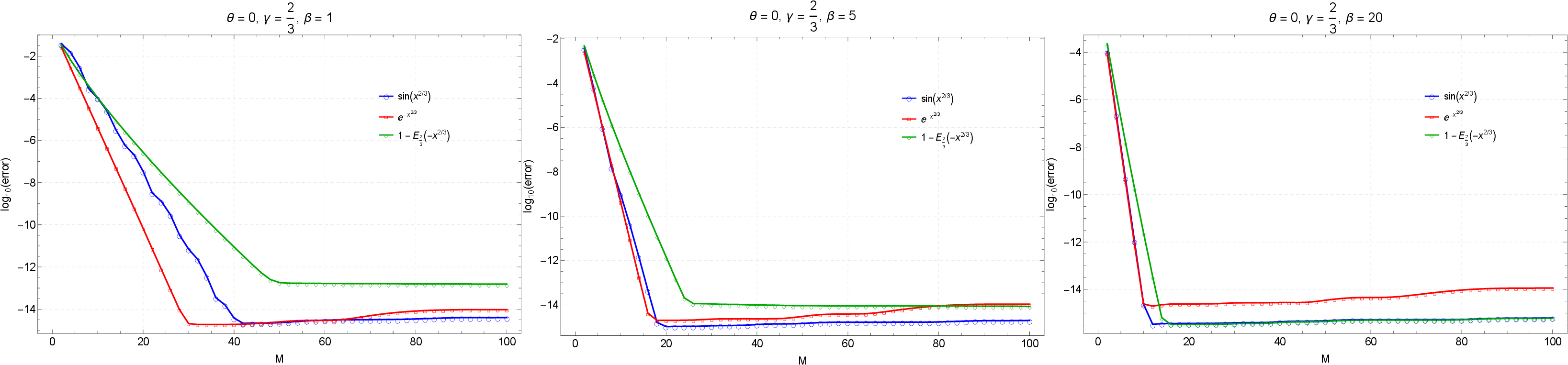}}
\caption{
Projection errors for
\(u_1(x)=\sin(x^{2/3})\), \(u_2(x)=\exp(-x^{2/3})\), and
\(u_3(x)=x^{2/3}E_{2/3,5/3}(-x^{2/3})\), with
\(\theta=0\), \(\gamma=2/3\), and \(\beta=1,5,20\).
}
\label{fig:FLF-proj-beta}
\end{figure}

Finally, we consider the weaker algebraic scale \(\alpha=1/4\).
Figure~\ref{fig:FLF-proj-gamma-fourth} gives the projection errors for
\(\theta=0\), \(\beta=20\), and approximation parameters
\(\gamma=1,1/4,1/8\). The comparison confirms the sensitivity of the
approximation to the choice of the fractional parameter. In particular,
selecting \(\gamma\) in accordance with the dominant fractional power of the
target functions significantly improves the decay of the projection error.

\begin{figure}[htbp]
\centering
\includegraphics[width=1\textwidth]{\detokenize{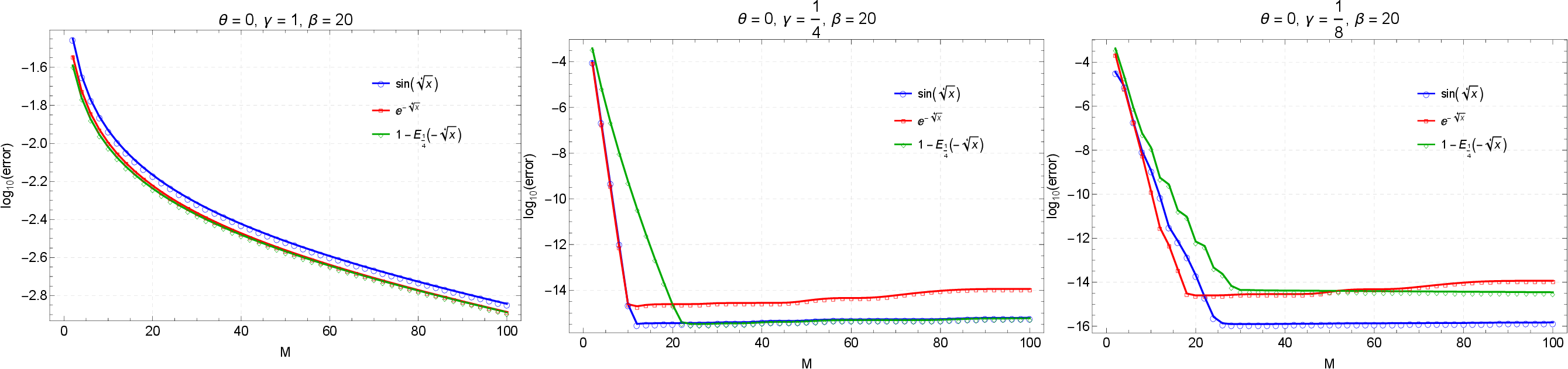}}
\caption{
Projection errors for
\(u_1(x)=\sin(x^{1/4})\), \(u_2(x)=\exp(-x^{1/4})\), and
\(u_3(x)=x^{1/4}E_{1/4,5/4}(-x^{1/4})\), with
\(\theta=0\), \(\beta=20\), and approximation parameters
\(\gamma=1,1/4,1/8\).
}
\label{fig:FLF-proj-gamma-fourth}
\end{figure}

\subsection{Approximation by generalized fractional Laguerre functions}

In this subsection, we examine the numerical performance of the generalized
fractional Laguerre expansion. The purpose is to illustrate the role of the
additional algebraic parameter \(\sigma\) and the scaling parameter \(\beta\)
in approximating functions with endpoint singularities or algebraic growth.

We first study the effect of \(\beta\). The three test functions are
\[
        u_1(x)=\frac{\sin x}{x},\qquad
        u_2(x)=\frac{1}{\sqrt{x}},\qquad
        u_3(x)=\sqrt{x}\,e^{\sqrt{x}} .
\]
For \(u_1\), we take \(\theta=0\), \(\sigma=2\), and \(\gamma=1\), and compare
\(\beta=1,16,24\). For \(u_2\), we take \(\theta=0\), \(\sigma=2\), and
\(\gamma=1/2\), and compare \(\beta=1,8,16\). For \(u_3\), we take
\(\theta=2\), \(\sigma=0\), and \(\gamma=1/2\), and compare
\(\beta=4,16,24\). The corresponding projection errors are shown in
Fig.~\ref{fig:GFLF-beta-effect}. The curves indicate that the scaling
parameter \(\beta\) influences the rate at which the projection error decreases,
because it modifies the effective distribution of the Laguerre points in the
physical variable and changes the associated weighted structure.

\begin{figure}[htbp]
\centering
\includegraphics[width=1\textwidth]{\detokenize{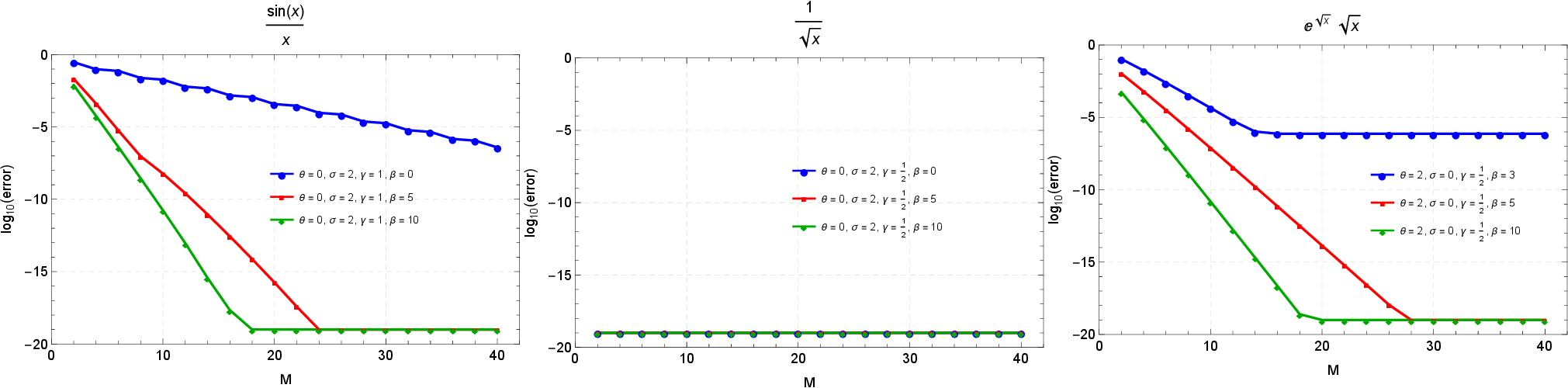}}
\caption{
Projection errors for the generalized fractional Laguerre approximation,
showing the effect of the scaling parameter \(\beta\). Left:
\(u_1(x)=\sin(x)/x\), with \(\theta=0\), \(\sigma=2\), \(\gamma=1\), and
\(\beta=1,16,24\). Middle: \(u_2(x)=1/\sqrt{x}\), with
\(\theta=0\), \(\sigma=2\), \(\gamma=1/2\), and \(\beta=1,8,16\). Right:
\(u_3(x)=\sqrt{x}e^{\sqrt{x}}\), with \(\theta=2\), \(\sigma=0\),
\(\gamma=1/2\), and \(\beta=4,16,24\).
}
\label{fig:GFLF-beta-effect}
\end{figure}

We next examine the effect of the fractional parameter \(\gamma\). The test
functions are
\[
        u_1(x)=x^{-\gamma}\sin(x^\gamma),\qquad
        u_2(x)=x^{-\gamma},\qquad
        u_3(x)=x^\gamma e^{x^\gamma}.
\]
In each panel, the same value of \(\gamma\) is used in the target function and
in the approximation basis. For \(u_1\), we fix \(\theta=0\), \(\sigma=2\),
and \(\beta=16\), and compare \(\gamma=1/4,1/2,1\). For \(u_2\), we fix
\(\theta=0\), \(\sigma=2\), and \(\beta=8\), and compare
\(\gamma=1/4,1/2,1\). For \(u_3\), we fix \(\theta=2\), \(\sigma=0\), and
\(\beta=16\), and compare \(\gamma=1/2,2/3,1\). The corresponding results are
displayed in Fig.~\ref{fig:GFLF-gamma-effect}. The observed error decay shows
that matching the fractional structure of the basis with that of the target
function improves the accuracy of the approximation.

\begin{figure}[htbp]
\centering
\includegraphics[width=1\textwidth]{\detokenize{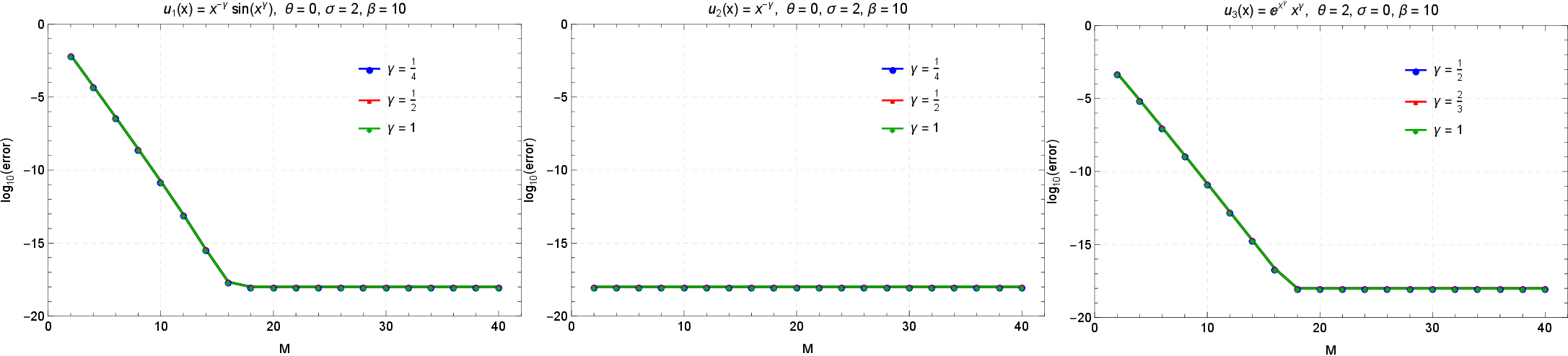}}
\caption{
Projection errors for the generalized fractional Laguerre approximation,
showing the effect of the fractional parameter \(\gamma\). Left:
\(u_1(x)=x^{-\gamma}\sin(x^\gamma)\), with \(\theta=0\), \(\sigma=2\),
\(\beta=16\), and \(\gamma=1/4,1/2,1\). Middle:
\(u_2(x)=x^{-\gamma}\), with \(\theta=0\), \(\sigma=2\), \(\beta=8\),
and \(\gamma=1/4,1/2,1\). Right:
\(u_3(x)=x^\gamma e^{x^\gamma}\), with \(\theta=2\), \(\sigma=0\),
\(\beta=16\), and \(\gamma=1/2,2/3,1\).
}
\label{fig:GFLF-gamma-effect}
\end{figure}

\appendix

\section{Auxiliary identities for Laguerre polynomials}
\label{app:Laguerre-properties}

For completeness, we collect several standard identities for generalized
Laguerre polynomials that are used throughout the paper. Let \(\theta>-1\),
and let \(L_m^{(\theta)}\) denote the generalized Laguerre polynomial of
degree \(m\) on \(\mathbb{R}^{+}\).

The first two polynomials are
\[
L_{0}^{(\theta)}(y)=1,
\qquad
L_{1}^{(\theta)}(y)=-y+\theta+1 .
\]
For \(m\ge1\), the three-term recurrence relation is
\begin{equation}\label{eq:Laguerre-recurrence}
L_{m+1}^{(\theta)}(y)
=
\frac{2m+\theta+1-y}{m+1}L_{m}^{(\theta)}(y)
-
\frac{m+\theta}{m+1}L_{m-1}^{(\theta)}(y).
\end{equation}

The polynomial \(L_m^{(\theta)}\) satisfies the singular Sturm--Liouville
equation
\begin{equation}\label{eq:Laguerre-SL}
y^{-\theta}e^{y}
\partial_{y}
\left(
y^{\theta+1}e^{-y}\partial_{y}L_{m}^{(\theta)}(y)
\right)
+
mL_{m}^{(\theta)}(y)=0 .
\end{equation}

We shall also use the following derivative and connection identities:
\begin{align}
L_{m}^{(\theta)}(y)
&=
\partial_{y}L_{m}^{(\theta)}(y)
-
\partial_{y}L_{m+1}^{(\theta)}(y),
\label{eq:Laguerre-connection-deriv}\\
y\partial_{y}L_{m}^{(\theta)}(y)
&=
mL_{m}^{(\theta)}(y)
-
(m+\theta)L_{m-1}^{(\theta)}(y),
\qquad m\ge1,
\label{eq:Laguerre-y-deriv}\\
\partial_{y}L_{m}^{(\theta)}(y)
&=
-L_{m-1}^{(\theta+1)}(y)
=
-\sum_{r=0}^{m-1}L_{r}^{(\theta)}(y),
\qquad m\ge1 .
\label{eq:Laguerre-derivative}
\end{align}

Finally, we recall the Laguerre--Gauss quadrature formula. Let
\(\{y_i^{(\theta)}\}_{i=0}^{M}\) be the zeros of
\(L_{M+1}^{(\theta)}(y)\). The corresponding quadrature weights are
\begin{equation}\label{eq:Laguerre-Gauss-weights}
\varpi_i^{(\theta)}
=
\frac{\Gamma(M+\theta+1)}
{(M+\theta+1)(M+1)!}
\frac{y_i^{(\theta)}}
{\left[L_{M}^{(\theta)}(y_i^{(\theta)})\right]^2},
\qquad 0\le i\le M .
\end{equation}
Hence,
\begin{equation}\label{eq:Laguerre-Gauss-quadrature}
\int_{0}^{\infty} q(y)y^{\theta}e^{-y}\,dy
=
\sum_{i=0}^{M}
q(y_i^{(\theta)})\varpi_i^{(\theta)},
\qquad
\forall q\in \mathbb{P}_{2M+1}.
\end{equation}

\section*{Data availability statement}
No datasets were generated or analyzed during the current study.

\section*{Declarations}

\section*{Conflict of interest}
The authors declare that they have no conflict of interest.

 \section*{Author Contributions}   Author Contributions All authors contributed equally to this work.

	\section*{Funding}This work was supported and funded by the Deanship of Scientific Research at Imam Mohammad Ibn Saud Islamic University (IMSIU).

\bibliographystyle{elsart-num-sort}
		
		
		\bibliography{Bibfileamc}


\end{document}